\newtheorem{theorem}{Theorem}[section]
\newtheorem{proposition}[theorem]{Proposition}
\newtheorem{lemma}[theorem]{Lemma}
\newtheorem{corollary}[theorem]{Corollary}
\theoremstyle{definition}
\newtheorem{definition}[theorem]{Definition}            
\newtheorem{example}[theorem]{Example}   
\numberwithin{equation}{section}
\newcommand{\mycontentsbox}{%
	
	{\centerline{NOT FOR PUBLICATION}
		\addtolength{\parskip}{-2.3pt}
		\tableofcontents}}
\def\enddoc@text{\ifx\@empty\@translators \else\@settranslators\fi
	\ifx\@empty\addresses \else\@setaddresses\fi
	\newpage\mycontentsbox\newpage\printindex}
\begin{document}
\title[Skolem-Noether algebras]{Skolem-Noether algebras}

\author[M. Bre\v sar]{Matej Bre\v sar${}^1$}
\address{Matej Bre\v sar, Faculty of Mathematics and Physics, University of Ljubljana; 
Faculty of Natural Sciences and Mathematics, University of Maribor}
\email{matej.bresar@fmf.uni-lj.si}
\thanks{${}^1$Supported by the Slovenian Research Agency grant P1-0288.}

\author[C. Hanselka]{Christoph Hanselka${}^2$}
\address{Christoph Hanselka, Department of Mathematics, University of Auckland}
\email{c.hanselka@auckland.ac.nz}
\thanks{${}^2$Supported by the Faculty Research Development Fund (FRDF) of The University of Auckland
(project no. 3709120).}

\author[I. Klep]{Igor Klep${}^3$}
\address{Igor Klep, Department of Mathematics, University of Auckland}
\email{igor.klep@auckland.ac.nz}
\thanks{${}^3$Supported by the Marsden Fund Council of the Royal Society of New Zealand. Partially supported
by the Slovenian Research Agency grants P1-0222, L1-6722, J1-8132.}

\author[J. Vol\v{c}i\v{c}]{Jurij Vol\v{c}i\v{c}${}^4$}
\address{Jurij Vol\v{c}i\v{c}, Department of Mathematics, University of Auckland}
\email{jurij.volcic@auckland.ac.nz}
\thanks{${}^4$Supported by the University of Auckland Doctoral Scholarship.}

\subjclass[2010]{Primary 16K20, 16W20; Secondary 16L30, 16U10.}
\date{\today}
\keywords{Skolem-Noether theorem, automorphism of a tensor product, central simple algebra, 
inner automorphism, semilocal ring, artinian algebra, Sylvester domain.}


\begin{abstract} 
An algebra $S$ is called a Skolem-Noether algebra (SN algebra for short)
if for every central simple algebra $R$, every homomorphism $R\to R\otimes S$ 
extends to an inner automorphism of $R\otimes S$. One of the important properties of such an  algebra is that each automorphism of a matrix algebra over $S$ is the composition of
an inner automorphism with an automorphism of $S$. The bulk of the paper is devoted to finding properties and examples of SN algebras. 
The classical Skolem-Noether theorem implies that every central simple algebra  is SN.
In this article it is shown that actually so is every semilocal, and hence every finite-dimensional algebra. Not every domain is SN, but, for instance,  unique factorization domains, polynomial algebras and free algebras are. Further, 
an algebra $S$ is
SN if and only if the power series algebra $S[[\xi]]$ is SN.
\end{abstract}

\maketitle
\section{Introduction}

Our main motivation for this work is the
celebrated Skolem-Noether theorem. We will state its version as given, for example, in \cite{Her}. 
But first, a word on conventions. All our algebras are assumed to be unital algebras over a  fixed field $F$, subalgebras  are assumed to contain the same unity, and all homomorphisms send $1$ to $1$.

\begin{theorem} {\bf (Skolem-Noether)} Let $A$ be simple artinian algebra with center $F$. If $R$ is a finite-dimensional simple $F$-subalgebra of $A$ and $\varphi$ is an $F$-algebra homomorphism from $R$ into $A$, then there exists an invertible element $c\in A$ such that $\varphi(x)=cxc^{-1}$ for all $x\in R$. (In other words, $\varphi$ can be extended to an inner automorphism of $A$.)
\end{theorem}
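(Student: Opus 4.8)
\emph{Proof proposal.} The plan is to run the classical module-theoretic argument. Set $\Lambda := A \otimes_F R^{\mathrm{op}}$ and make the vector space $A$ into a left $\Lambda$-module in two ways: $(a\otimes b)\cdot_{\mathrm{id}} x := axb$ and $(a\otimes b)\cdot_\varphi x := ax\varphi(b)$ for $a,x\in A$ and $b\in R$ (associativity in $A$ makes both actions well defined). The key observation is that both structures restrict, along $A = A\otimes F \hookrightarrow \Lambda$, to the left-regular action of $A$ on itself. If I can show the two $\Lambda$-module structures on $A$ are isomorphic, then such an isomorphism is in particular an $A$-linear automorphism of $A$, hence right multiplication $x\mapsto xc$ by some invertible $c\in A$; compatibility with the action of $1\otimes R^{\mathrm{op}}$ then reads $xrc = xc\,\varphi(r)$ for all $x\in A$, $r\in R$, i.e.\ $\varphi(r)=c^{-1}rc$, and replacing $c$ by $c^{-1}$ proves the theorem.

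So the whole proof reduces to showing $(A,\cdot_{\mathrm{id}})\cong(A,\cdot_\varphi)$ as $\Lambda$-modules, and for that I would first establish that $\Lambda$ is simple artinian. It is artinian because $R$ is finite-dimensional over $F$, so $\Lambda$ is a free left $A$-module of finite rank $\dim_F R$ and $A$ is artinian. For simplicity, take a nonzero ideal $\mathfrak a\trianglelefteq\Lambda$ and pick $0\ne u=\sum_{i=1}^k a_i\otimes b_i\in\mathfrak a$ with $k$ minimal and the $b_i$ linearly independent over $F$; since $A$ is simple the ideal generated by $a_1$ is all of $A$, so after suitable left/right multiplication by elements of $A\otimes F$ one may assume $a_1=1$; then for every $a\in A$ the element $(a\otimes1)u-u(a\otimes1)\in\mathfrak a$ has fewer than $k$ terms, so by minimality it vanishes, forcing $a_i\in Z(A)=F$ for $i\ge2$; hence $\mathfrak a$ contains $1\otimes b$ for some $0\ne b\in R$, and simplicity of $R$ gives $1\otimes1\in\mathfrak a$, i.e.\ $\mathfrak a=\Lambda$.

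With $\Lambda$ simple artinian, the rest is semisimple bookkeeping. Since $A$ is simple artinian it has a unique simple module $V$ (up to isomorphism) and $A\cong V^{\oplus n}$ as a left $A$-module for some finite $n$. Let $T$ be the unique simple $\Lambda$-module and $s:=\ell_A(T)<\infty$; then every finitely generated $\Lambda$-module $M$ is $\cong T^{\oplus\ell_\Lambda(M)}$, whence $\ell_\Lambda(M)=\ell_A(M)/s$. Both $(A,\cdot_{\mathrm{id}})$ and $(A,\cdot_\varphi)$ are cyclic $\Lambda$-modules (generated by $1$) whose restriction to $A$ is $V^{\oplus n}$, so $\ell_\Lambda(A,\cdot_{\mathrm{id}})=n/s=\ell_\Lambda(A,\cdot_\varphi)$; hence the two $\Lambda$-modules are isomorphic, which is exactly what was needed.

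The main obstacle is the simplicity of $\Lambda=A\otimes_F R^{\mathrm{op}}$: this is the step that really uses the hypotheses $Z(A)=F$ and ``$R$ finite-dimensional simple'', and it is also what legitimizes the length count in the final step (which silently uses that $\Lambda$ is both simple and artinian). Everything else — the two module structures, the translation of a module isomorphism into a conjugating element, and the semisimple length arithmetic — is routine.
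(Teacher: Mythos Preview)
Your argument is correct; it is the standard module-theoretic proof of Skolem--Noether. One phrasing could be tightened: ``after suitable left/right multiplication by elements of $A\otimes F$ one may assume $a_1=1$'' really means replacing $u$ by $\sum_j(p_j\otimes1)u(q_j\otimes1)$ where $\sum_j p_ja_1q_j=1$; this new element lies in $\mathfrak a$, is nonzero (the $b_i$ remain independent and the coefficient of $b_1$ is $1$), and has at most $k$ tensor summands, which is all the commutator step needs. Everything else is fine.

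That said, there is nothing to compare against: the paper does not prove this theorem. It is quoted in the introduction as a classical result (with a reference to Herstein) and then used as a black box, for instance in the proof that semisimple algebras are SN. The paper's own technical engine, the Basic Lemma, attacks the related problem of homomorphisms $R\to R\otimes S$ by a quite different device---expressing $\varphi$ through two-sided multiplication maps $L_wR_z$ on $R$ to manufacture explicit intertwiners $c_k$ with $\varphi(x)c_k=c_kx$---precisely because your semisimple length count requires the target algebra to be artinian, which $R\otimes S$ generally is not.
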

 
Recall that 
an algebra is said to be {\bf central} if its center consists of 
scalar multiples of unity.
As usual, we will use the term {\bf central simple algebra} for an algebra that is central, simple, and also finite-dimensional. 

\begin{definition}\label{def}
An algebra $S$ is a {\bf Skolem-Noether algebra} ({\bf SN algebra} for short) if for every central simple algebra $R$ and every homomorphism $\varphi:R\to R\otimes S$ there exists an invertible  element $c\in R\otimes S$ such that $\varphi(x)= cxc^{-1}$  for every $x\in R$. (Here, $R$ is identified with $R\otimes 1$).
\end{definition}

The  Skolem-Noether theorem, together 
with the well-known fact that the class  of central simple algebras is closed under tensor products, implies that every central simple algebra $S$ is an  SN algebra.  A partial converse is also true: the assertion that  central simple algebras are SN algebras implies 
an important special case of the Skolem-Noether theorem where $A$ is a central simple algebra and $R$ is its central simple subalgebra. This is because, under these assumptions,
$A$  is isomorphic to $R\otimes S$ where $S$ is also a central simple subalgebra of $A$ \cite[Corollary 4.49]{INCA}.

SN algebras naturally arise from the problem of understanding automorphism groups of tensor products of algebras. 
Unlike the case of derivations on tensor products \cite{Bre}, the general solution to this problem seems far out of reach.
For instance, while automorphisms of univariate and bivariate polynomial algebras
are well understood \cite{Jun}, already the trivariate case is wild \cite{SU}.
In another direction, functional analysts consider the question when the flip automorphism $A\otimes A\to A\otimes A$ is (approximately) inner for operator algebras $A$, see \cite{S,ER,Izu}. In this paper we settle the following special case of the above problem. If $S$ is an SN algebra and $R$ is a central simple algebra, then automorphisms of $R\otimes S$ are just compositions of inner automorphisms and automorphisms of $S$; see Proposition \ref{prop:aut}. While 
the class of SN algebras looks restrictive, our main results show that various classical and important families of algebras satisfy the SN property, for example semilocal (in particular artinian and finite-dimensional) algebras, unique factorization domains, free algebras, etc.

Some of the readers might be interested only in the case where $R=M_n(F)$,  the algebra of $n\times n$ matrices with entries in $F$.  Let us therefore mention that since $M_n(F)\otimes S$ can be identified with $M_n(S)$, the condition that $S$ is an SN algebra implies that every homomorphism from $M_n(F)$ into $M_n(S)$ can be extended to an inner automorphism of $M_n(S)$. Moreover, we show in Proposition \ref{p:mat} that the latter condition implies the SN property. 
However, this does not lead to any simplifications of our proofs, so  we persist with  central simple algebras as in Definition \ref{def}.

\subsection*{Main results and guide to the paper}
The short Section \ref{s:prelim} on preliminaries
includes Proposition \ref{p:mat}:  $S$ is an SN algebra if and only if all homomorphisms $M_n(F)\to M_n(S)$ extend to inner automorphisms. Section \ref{s:auto} 
positions SN algebras  into a wider context of automorphisms of tensor products. For instance, Proposition \ref{prop:aut} proves that given an 
SN algebra $S$ and a central simple algebra 
$R$, every automorphism of $R\otimes S$ is the composition of an inner automorphism and an automorphism of $S$. In particular, this applies to matrix algebras over SN algebras. 

We then identify classes of algebras which satisfy the SN property. In Section \ref{s:basic} we derive Lemma \ref{l}, which is the main technical tool for proving subsequent results. Section \ref{s:semiloc} culminates in Theorem \ref{thm:semiLoc} showing that semilocal algebras are SN. Hence all artinian algebras and thus all finite-dimensional algebras are SN. Section \ref{s:findim}
refines the latter result. Namely, 
every homomorphism from a central simple subalgebra $R$ of a finite-dimensional algebra $A$
into $A$ extends to an  inner
automorphism of $A$ (see Theorem \ref{tfd}). In Section \ref{s:dom} we give examples of 
domains which are SN algebras, such as 
unique factorization domains (UFDs) and free algebras, see Corollary \ref{ufd} 
and Corollary \ref{cor:free}.
Section~\ref{s:matrixpoly} uses the Quillen-Suslin theorem to prove that matrix algebras over polynomial algebras are SN.
The paper concludes with Section \ref{s:poly},
where we show that an algebra $S$ is SN if and only if the formal power series algebra $S[[\xi]]$ is SN.

\section{Preliminaries}\label{s:prelim}

The purpose of this   section is to introduce the notation and terminology, and 
prove a proposition that yields a characterization of SN algebras.

Let $R$ be a central simple algebra. Given  $w, z \in R$, we define 
the left and right multiplication operators
 $L_w, R_z:R\to R$
by  
 $$L_w(x)=wx\quad\mbox{and}\quad R_z(x)=xz.$$ 
As is well-known, every linear map from $R$ into $R$ can be written as a sum of maps of the form $L_w R_z$, $w,z\in R$  \cite[Lemma 1.25]{INCA}. Accordingly, given a basis $\{r_1,\dots,r_d\}$ of $R$, there exists $w_j,z_j\in R$ such that
$h=\sum_j L_{w_j}R_{z_j}$ satisfies 
$h(r_1)=1$ and $h(r_k)=0$, $k\ne 1$. That is, 
\begin{equation}\label{oh}
\sum_j w_jr_1z_j = 1\quad \mbox{and}\quad \sum_j w_jr_kz_j = 0\,\,\,\mbox{if $k> 1$.}
\end{equation}

We will be mostly concerned with  tensor product algebras $R\otimes S$. 
Here $R,S$ are algebras over a field $F$ and the tensor product is taken over $F$.
As usual,  
 we identify  $R$ by $R\otimes 1$, and, accordingly, often write $r\otimes 1\in R\otimes 1$ simply as $r$.
 Let us  point out an elementary fact that will be used freely  without further reference. If the $r_i$'s are linearly independent elements 
in $R$, then for all  $p_j\in R$ and  $s_j, t_i\in S$,
\begin{equation}\label{nov}
\sum_i r_i\otimes t_i = \sum_j  p_j\otimes s_j
\end{equation}
implies that each $t_i$ lies in the linear span of the $s_j$'s \cite[Lemma 4.9]{INCA}. Similarly,  assuming  that the $t_i$'s are linearly independent, it follows from \eqref{nov}  that each $r_i$ lies in the linear span of the $p_j$'s.

By rad$(S)$ we denote the {\bf Jacobson radical} of the algebra $S$. Recall that $S$ is called a {\bf semilocal algebra}
if $S/{\rm rad}(S)$ is a semisimple algebra, i.e., isomorphic to a finite direct product of simple artinian algebras. 
In the special case where  $S/{\rm rad}(S)$ is a division algebra, $S$ is called a {\bf local algebra}. Finally, we say that  $S$ is a {\bf stably finite algebra} if for all $n\ge 1$ and all $x,y\in M_n(S)$, $xy=1$ implies $yx=1$.

To conclude the section we  give an alternative characterization of the SN property.
In order to show that $S$ is an SN algebra it suffices to 
verify 
the condition of Definition \ref{def} for $R=M_n(F)$, i.e., 
all $F$-algebra homomorphisms $M_n(F)\to M_n(S)$ 
are given by conjugation.

\begin{proposition}\label{p:mat}
Let $S$ be an algebra and suppose that for every $n\in\mathbb{N}$ and a homomorphism $\varphi:M_n(F)\to M_n(S)$ there exists $c\in M_n(S)$ such that $\varphi(x)=cxc^{-1}$ for every $x\in R$. Then $S$ is an SN algebra.
\end{proposition}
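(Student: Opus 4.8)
The plan is to reduce the general SN condition for an arbitrary central simple algebra $R$ to the special case $R = M_n(F)$, using the classical Skolem-Noether theorem as a bridge. So suppose $R$ is a central simple algebra of dimension $d$ over $F$, and let $\varphi\colon R\to R\otimes S$ be a homomorphism. First I would pick a splitting field; more economically, I would pass to a matrix representation of $R$ by embedding $R$ into some $M_n(F)$. Recall that a central simple $F$-algebra $R$ of degree $m$ (so $d=m^2$) satisfies $R\otimes R^{\mathrm{op}}\cong M_d(F)$, and in particular $R$ embeds into $M_d(F)$ as an $F$-subalgebra via the regular representation (left multiplications). Concretely, fixing a basis $\{r_1,\dots,r_d\}$ of $R$, the map $L\colon R\to M_d(F)$, $r\mapsto L_r$, is an injective homomorphism. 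The idea is then to leverage a homomorphism $M_d(F)\to M_d(S)$ built from $\varphi$, apply the hypothesis to get a conjugating element, and transport it back to conjugate $\varphi$ on $R$ inside $R\otimes S$.

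The key step is to produce, from $\varphi$, a homomorphism $\Phi\colon M_d(F)\to M_d(S)$ to which the hypothesis applies, and then to recognize $M_d(S)\cong M_d(F)\otimes S$ and compare with $R\otimes S$ via the embedding $L\otimes \mathrm{id}\colon R\otimes S\hookrightarrow M_d(F)\otimes S = M_d(S)$. Because $L$ identifies $R$ with its image $L(R)\subseteq M_d(F)$, and $M_d(F)$ is itself central simple, the composite $R\xrightarrow{\varphi} R\otimes S\xrightarrow{L\otimes\mathrm{id}} M_d(S)$ is a homomorphism from the central simple subalgebra $L(R)$ of $M_d(F)$ into the larger algebra $M_d(S) = M_d(F)\otimes S$. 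Here I would either invoke the hypothesis after first extending $L(R)\hookrightarrow M_d(F)$ suitably, or—more directly—use the classical Skolem-Noether theorem (Theorem 1.1) over the field $F(S)$-like localization, but since $S$ need not be commutative, the cleanest route is: the centralizer argument showing $M_d(F)\cong L(R)\otimes C$ where $C$ is the centralizer of $L(R)$ in $M_d(F)$, so that any homomorphism on $L(R)$ into $M_d(S)=M_d(F)\otimes S$ whose restriction we control extends, by the hypothesis applied to the central simple algebra $M_d(F)$, to an inner automorphism of $M_d(S)$. Thus there is an invertible $c\in M_d(S)$ with $(L\otimes\mathrm{id})(\varphi(x)) = c\,L_x\,c^{-1}$ for all $x\in R$.

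The last step is to descend from $M_d(S)$ back to $R\otimes S$. Here I would use the explicit elements $w_j, z_j\in R$ from equation \eqref{oh}, which encode that $\sum_j L_{w_j}R_{z_j}$ is the coordinate functional picking out the coefficient of $r_1$. The element $\sum_j (w_j\otimes 1)\,c\,(z_j\otimes 1)$ (or an appropriate variant built from $c$ and the $w_j, z_j$) should land in $R\otimes S$ and serve as the conjugator witnessing that $\varphi$ is inner on $R\otimes S$; one checks it is invertible (using stable finiteness, or directly that the analogous expression built from $c^{-1}$ is its inverse) and that it conjugates $x\in R$ to $\varphi(x)$ by unwinding the identity $(L\otimes\mathrm{id})\varphi(x) = c L_x c^{-1}$ together with \eqref{oh} and the linear-independence fact \eqref{nov}.

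I expect the main obstacle to be the descent step: showing that the conjugating element constructed from $c$ via the $w_j, z_j$ actually lies in $R\otimes S$ (not merely in $M_d(S)$) and is genuinely invertible there. The subtlety is that $c$ is only known abstractly in $M_d(S)$, so one must use the defining property \eqref{oh} of the $w_j,z_j$ carefully to project $c$ into the subalgebra $R\otimes S\cong L(R)\otimes S$, and then verify invertibility—likely by exhibiting an explicit two-sided inverse built the same way from $c^{-1}$, invoking that $R\otimes S$ is stably finite or arguing directly that conjugation by this element is an automorphism of $R\otimes S$ extending $\varphi$. The rest—the embedding of $R$ into $M_d(F)$ and the application of the hypothesis to $M_d(F)$—is standard central simple algebra theory and should go through routinely.
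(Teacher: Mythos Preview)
Your overall strategy---embed $R$ in $M_d(F)$ via the regular representation, use the double-centralizer decomposition $M_d(F)\cong L(R)\otimes C$ with $C=\{R_r:r\in R\}\cong R^{\mathrm{op}}$, extend $\varphi$ to a homomorphism $\Phi\colon M_d(F)\to M_d(S)$, and apply the hypothesis---is valid and genuinely different from the paper's. The paper instead passes to $R^{\otimes e}$, where $e$ is the exponent of $R$ in the Brauer group, so that $R^{\otimes e}\cong M_n(F)$; it then descends by induction, peeling off one tensor factor of $R$ at a time. Your route avoids the Brauer group entirely and descends in a single step, which is arguably cleaner; the paper's route is more uniform in that the descent argument is literally the same at each stage.

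However, your proposal has a real gap in the descent, and the tool you reach for is the wrong one. You never specify how $\Phi$ acts on $C$: the correct (and essentially forced) choice is to let $\Phi|_C$ be the inclusion $C\hookrightarrow M_d(F)\hookrightarrow M_d(S)$, which is well-defined because $(L\otimes\mathrm{id})(\varphi(r))$ is built from left multiplications and hence commutes with every $R_{r'}\otimes 1$. Once you make that choice, the descent is immediate and has nothing to do with the elements $w_j,z_j$ from \eqref{oh}: since $\Phi(y)=cyc^{-1}=y$ for all $y\in C$, the element $c$ centralizes $C\otimes 1$, and writing $c=\sum_i m_i\otimes s_i$ with the $s_i$ linearly independent forces each $m_i$ into the centralizer of $C$ in $M_d(F)$, which is $L(R)$. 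Thus $c\in L(R)\otimes S$, and the same argument applied to $c^{-1}$ shows $c^{-1}\in L(R)\otimes S$ as well; no stable finiteness is needed. This is exactly the mechanism the paper uses at each inductive step (there, $\tilde\varphi$ is the identity on the leading factor $R$, forcing $c$ into $1\otimes R^{\otimes(e-1)}\otimes S$). Your proposed projection $\sum_j (w_j\otimes 1)c(z_j\otimes 1)$ does not obviously land in $L(R)\otimes S$ for arbitrary $c\in M_d(S)$, and the invertibility worry you raise disappears entirely once you use the centralizer argument.
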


\begin{proof}
Let $R$ be a central simple algebra and $\varphi:R\to R\otimes S$ a homomorphism. Let $e\in\mathbb{N}$ be the exponent of $R$, i.e., the order of $R$ as an element of the Brauer group of $F$ \cite[Definition 4.5.12]{GS}. Let
$$\tilde{\varphi}={\rm id}^{e-1}\otimes \varphi: R^{\otimes e}\to R^{\otimes e}\otimes S.$$
Since
$$R^{\otimes e}\cong M_{(\deg R)^e}(F),$$
by assumption there exists $c\in R^{\otimes e}\otimes S$ such that
$\tilde{\varphi}(x)=cxc^{-1}$ for every $x\in R^{\otimes e}$.

If $e>1$, we can write $c$ as
$$c=\sum_{i_2,\dots,i_e,j} a_{i_2,\dots,i_e,j}\otimes r_{i_2}\otimes\cdots \otimes r_{i_e}\otimes s_j$$
for some $a_{i_2,\dots,i_e,j},r_i\in R$ and $s_j\in S$ where $\{r_i\}_i\subset R$ and $\{s_j\}_j\subset S$ are linearly independent sets. If $x=x_1\otimes 1\otimes\cdots\otimes 1$ for $x_1\in R$, then $\tilde{\varphi}(x)c-cx=0$ becomes
$$ \sum_{i_2,\dots,i_e,j}
(xa_{i_2,\dots,i_e,j}-a_{i_2,\dots,i_e,j}x)\otimes r_{i_2}\otimes\cdots r_{i_e}\otimes s_j=0.$$
Since the elements $r_{i_2}\otimes\cdots r_{i_e}\otimes s_j$ form a linearly independent set in $R^{\otimes (e-1)}\otimes S$, we conclude that $xa_{i_2,\dots,i_e,j}=a_{i_2,\dots,i_e,j}x$ for all $a_{i_2,\dots,i_e,j}\in R$ and $x\in R$. As $R$ is central we have $a_{i_2,\dots,i_e,j}\in F$ and therefore $c\in 1\otimes R^{\otimes (e-1)}\otimes S\cong R^{\otimes (e-1)}\otimes S$. Consequently the homomorphism
$$\hat{\varphi}={\rm id}^{e-2}\otimes \varphi: R^{\otimes (e-1)}\to R^{\otimes (e-1)}\otimes S$$
satisfies $\hat{\varphi}(x)=cxc^{-1}$ for every $x\in R^{\otimes (e-1)}$. Continuing by induction we conclude that $c\in R\otimes S$ and $\varphi(x)=cxc^{-1}$ for all $x\in R$, so $S$ is an SN algebra.
\end{proof}

While Proposition \ref{p:mat} seemingly facilitates demonstrating that $S$ is an SN algebra, it does not  simplify our proofs in the sequel.

\section{SN algebras and automorphisms}\label{s:auto}

In this section we give a few motivating
results and prove that every automorphism of 
a matrix algebra over an SN algebra $S$
is an inner automorphism composed
with an automorphism of $S$, see Corollary \ref{cor:autMn}.

We begin with a  proposition which justifies the requirement in  Definition \ref{def} that the algebra  $R$ 
is central simple.

\begin{proposition}\label{rjes}
Let $R$ be a subalgebra of an algebra $S$. If the homomorphism $x\otimes 1\mapsto 1\otimes x$ from $R=R\otimes 1$ 
into $R\otimes S$ can be extended to an inner automorphism of $R\otimes S$, then 
 $R$ is a central simple algebra.
\end{proposition}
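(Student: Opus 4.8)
The plan is to show that the two obstructions to $R$ being central simple—namely, a nontrivial central element and a nontrivial two-sided ideal—are each incompatible with the hypothesis. Suppose $\psi$ is an inner automorphism of $R\otimes S$, say $\psi(y)=cyc^{-1}$ for an invertible $c\in R\otimes S$, satisfying $\psi(x\otimes 1)=1\otimes x$ for all $x\in R$. The key identity to exploit is
\begin{equation*}
c(x\otimes 1)=(1\otimes x)c\qquad\text{for all }x\in R.
\end{equation*}

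First I would establish centrality. Write $c=\sum_i r_i\otimes s_i$ with the $s_i\in S$ linearly independent. For $z$ in the center $Z(R)$, the identity $c(z\otimes 1)=(1\otimes z)c$ reads $\sum_i (r_iz)\otimes s_i=\sum_i r_i\otimes(zs_i)$; but since $z$ is central in $R$, the left side is $\sum_i(zr_i)\otimes s_i$, so $\sum_i (zr_i)\otimes s_i=\sum_i r_i\otimes (zs_i)$. Comparing (using linear independence of the $s_i$ on the left, and of the $r_i$ on the right after re-expressing) forces $z\otimes s_i$ to be scalar-like; more carefully, applying $\mathrm{id}\otimes\lambda$ for a linear functional $\lambda$ on $S$ with $\lambda(s_1)=1$, $\lambda(s_i)=0$ otherwise, I get $zr_1=r_1$ times... — the cleanest route is: the relation $c(z\otimes1)=(z\otimes1)c$ (valid since $z\otimes1$ is central in $R\otimes S$) combined with $c(z\otimes1)=(1\otimes z)c$ gives $(z\otimes1)c=(1\otimes z)c$, hence $z\otimes1=1\otimes z$ in $R\otimes S$ after cancelling the invertible $c$. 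Since $1$ and a nonscalar $z$ would be linearly independent in $R$, the equality $z\otimes 1=1\otimes z$ via \eqref{nov} forces $z\in F$. Thus $R$ is central.

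Next I would establish simplicity. Let $I\subsetneq R$ be a two-sided ideal; I must show $I=0$. Consider the conjugation relation applied to the subspace $I\otimes 1$: we have $\psi(I\otimes 1)=1\otimes I$. Now $I\otimes S$ is a two-sided ideal of $R\otimes S$, and applying the automorphism $\psi$ one sees $\psi(I\otimes S)$ is an ideal; since $\psi(I\otimes1)=1\otimes I\subseteq 1\otimes I\otimes 1$... the point is that $1\otimes I$ is contained in $\psi(R\otimes 1)$, whereas $R\otimes 1$ has the property that $(R\otimes1)\cap(1\otimes I\cdot(R\otimes S))$ is controlled. Concretely: from $c(x\otimes1)=(1\otimes x)c$ with $x\in I$, and writing $c=\sum r_i\otimes s_i$, $c^{-1}=\sum r'_k\otimes s'_k$ with both $\{s_i\}$ and $\{s'_k\}$ independent, the element $1\otimes x=c(x\otimes1)c^{-1}=\sum_{i,k}(r_i x r'_k)\otimes(s_i s'_k)$ shows that $1\otimes x$ lies in $R\,I\,R\otimes S\subseteq I\otimes S$; but $1\otimes x$ with $x\neq0$ is not in $I\otimes S$ unless $1\in I$, i.e. $I=R$. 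Wait—$1\otimes x\in I\otimes S$ does hold trivially since $x\in I$. The sharper observation: $x\otimes1=c^{-1}(1\otimes x)c\in (1\otimes RxR)\cdot$-type expression lands in $R\otimes SxS$; since $x\in I$, $SxS$ need not be all of $S$, but the relevant containment is $x\otimes 1\in R\otimes (S x S)$, and pairing against a functional killing $SxS$-free directions forces—this needs the honest argument that $I\otimes 1$ and $1\otimes I$ together with $c$ generate a proper ideal unless $I=R$, contradicting that $\psi$ is onto.

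The main obstacle I anticipate is the simplicity half: making the ideal argument airtight, since $I\otimes S$ and $1\otimes(\text{stuff})$ interact subtly in the tensor product. The clean fix is to run it symmetrically—use $\psi$ to transport $I\otimes 1$ to $1\otimes I$ and also $\psi^{-1}$ to transport $1\otimes I$ (hence $I\otimes S$, being the ideal it generates) back, deducing that the ideal of $R\otimes S$ generated by $I\otimes1$ equals the ideal generated by $1\otimes I$, i.e. $I\otimes S=R\otimes I$; comparing dimensions (or applying \eqref{nov} with a basis of $R$ through $I$) gives $(\dim_F I)(\dim_F S)=(\dim_F R)(\dim_F I)$ when finite, and in general a basis argument via \eqref{nov} forces $I=R$ or $I=0$. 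So the final step is purely the tensor-product bookkeeping of \eqref{nov}.
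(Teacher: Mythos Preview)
Your centrality argument (after the detour) is correct and coincides with the paper's: since $z\otimes 1$ is central in $R\otimes S$, conjugation by $c$ fixes it, so $1\otimes z=c(z\otimes 1)c^{-1}=z\otimes 1$, forcing $z\in F$ via \eqref{nov}.

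There are, however, two genuine gaps.

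\textbf{Finite-dimensionality is missing.} In this paper ``central simple algebra'' \emph{includes} finite-dimensional by definition (stated just before Definition~\ref{def}). You never address this; your plan lists only ``nontrivial central element'' and ``nontrivial two-sided ideal'' as the obstructions. The paper handles it in one line from the identity you yourself wrote down: with $c=\sum_i u_i\otimes v_i$ and $c^{-1}=\sum_j w_j\otimes z_j$,
\[
1\otimes x=\sum_{i,j}u_ixw_j\otimes v_iz_j,
\]
so by \eqref{nov} every $x\in R$ lies in the fixed finite-dimensional $F$-span of the $v_iz_j$.

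\textbf{The simplicity argument is not complete.} You had the right tool and discarded it. From the same identity, for $x\neq 0$ the singleton $\{x\}$ is linearly independent in $S$, and \eqref{nov} (applied in the other direction) gives $1\in\operatorname{span}_F\{u_ixw_j\}\subseteq RxR$. That is the whole argument. Your replacement route---comparing the ideals generated by $I\otimes 1$ and $1\otimes I$ and concluding $I\otimes S=R\otimes I$---does not work: the inner automorphism $\psi$ carries the ideal generated by $I\otimes 1$ to the ideal generated by $1\otimes I$, but there is no reason these are the \emph{same} ideal (an automorphism need not fix ideals), and the ideal of $R\otimes S$ generated by $1\otimes I$ is $R\otimes SIS$, not $R\otimes I$, since $I$ is only an ideal of $R$, not of $S$. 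The dimension-count finish also presupposes finite dimensions, which you have not established.

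So: keep your centrality paragraph, then use the expansion $1\otimes x=\sum u_ixw_j\otimes v_iz_j$ twice with \eqref{nov}---once to get $R$ finite-dimensional, once to get $1\in RxR$ for $x\neq 0$.
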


\begin{proof}
 By assumption, there exists an invertible element $a\in R\otimes S$ such that 
$$1\otimes x =a(x\otimes 1)a^{-1}$$ for all $x\in R$.
 Let us write $a=\sum_{i=1}^m u_i\otimes v_i$ and $a^{-1}= \sum_{j=1}^n w_j\otimes z_j$.
Accordingly,
\begin{align}\label{k}
1\otimes x   =& \Big(\sum_{i=1}^m u_i\otimes v_i\Big)(x\otimes 1)\Big( \sum_{j=1}^n w_j\otimes z_j\Big) \\
=&\sum_{i=1}^m \sum_{j=1}^n u_ixw_j\otimes v_iz_j.\nonumber
\end{align}
This implies that every $x\in R$ lies in the linear span of all $v_iz_j$, $i=1,\dots,m$, $j=1,\dots,n$. Thus, $R$ is finite-dimensional.
On the other hand,  \eqref{k} implies that for every nonzero  $x\in R$, $1$ lies in $RxR$. This means that $R$ is simple. 
Finally, if $z$ lies in the center of $R$, then $1\otimes z = a(z\otimes 1)a^{-1} = z\otimes 1$, which readily implies that $z$ is a scalar multiple of $1$,  as desired. 
\end{proof}

The question of when the automorphism $x\otimes y\mapsto y\otimes x$ of $R\otimes R$ is inner was initiated by Sakai \cite{S}  in the C$^*$-algebra context, and investigated further by Bunce \cite{Bu}. The following corollary is an extension of \cite[Theorem 2]{Bu}.

\begin{corollary}\label{rjes2}
Let $R$ be an arbitrary algebra. The homomorphism $x\otimes 1\mapsto 1\otimes x$ from $R=R\otimes 1$ into $R\otimes R$ can be extended to an inner automorphism of $R\otimes R$ if and only if  
 $R$ is a  central simple algebra.
\end{corollary}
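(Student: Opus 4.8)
The plan is to derive Corollary \ref{rjes2} as a quick consequence of Proposition \ref{rjes} together with the classical Skolem-Noether theorem. One direction is already done: if the homomorphism $x\otimes 1\mapsto 1\otimes x$ from $R=R\otimes 1$ into $R\otimes R$ extends to an inner automorphism, then applying Proposition \ref{rjes} with $S=R$ (so that $R$ is indeed a subalgebra of $S$, namely of itself) immediately shows that $R$ is a central simple algebra. So the content of the corollary over and above Proposition \ref{rjes} is the converse.

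For the converse, suppose $R$ is a central simple algebra. Then $R\otimes R$ is again a central simple algebra (the class of central simple algebras is closed under tensor products, as noted in the introduction), hence in particular it is simple artinian with center $F$. The map $\iota_1\colon x\mapsto x\otimes 1$ and the map $\iota_2\colon x\mapsto 1\otimes x$ are both injective $F$-algebra homomorphisms from the finite-dimensional simple $F$-algebra $R$ into $A:=R\otimes R$. Consider the composite homomorphism $\varphi:=\iota_2\circ\iota_1^{-1}$ from the subalgebra $\iota_1(R)=R\otimes 1$ of $A$ into $A$; this is exactly the map $x\otimes 1\mapsto 1\otimes x$. Since $R\otimes 1$ is a finite-dimensional simple $F$-subalgebra of the simple artinian central $F$-algebra $A$, the Skolem-Noether theorem (Theorem 1.1 in the excerpt) applies and furnishes an invertible $c\in A=R\otimes R$ with $\varphi(y)=cyc^{-1}$ for all $y\in R\otimes 1$. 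That is precisely the statement that $x\otimes 1\mapsto 1\otimes x$ extends to an inner automorphism of $R\otimes R$.

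I do not anticipate any genuine obstacle here: both implications are essentially immediate given Proposition \ref{rjes} and the classical Skolem-Noether theorem, and the only thing to be a little careful about is verifying the hypotheses of Skolem-Noether in the converse direction, namely that $R\otimes R$ is central simple (so simple artinian with center $F$) and that $R\otimes 1$ is a finite-dimensional simple subalgebra — both of which are standard. One could alternatively phrase the converse using Definition \ref{def} with $S=R$ (noting the map $\varphi\colon R\to R\otimes R$, $x\mapsto 1\otimes x$ is a homomorphism), since every central simple algebra is an SN algebra; this is perhaps the slickest formulation and avoids re-invoking Skolem-Noether explicitly. Either way the proof is only a couple of lines.
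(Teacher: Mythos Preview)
Your proposal is correct and matches the paper's proof essentially line for line: one direction is Proposition~\ref{rjes} with $S=R$, and the converse uses that $R\otimes R$ is central simple together with the Skolem--Noether theorem. The alternative phrasing you mention via Definition~\ref{def} is also fine and amounts to the same argument.
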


\begin{proof}
If $R$ is a central simple algebra, then so is $R\otimes R$ \cite[Corollary 4.44]{INCA}, and so every homomorphism from $R$ into $R\otimes R$ can be extended to an inner automorphism by the Skolem-Noether theorem. The converse follows from Proposition \ref{rjes}.
 \end{proof}

The next proposition yields another motivation for considering SN algebras.

\begin{proposition}\label{prop:aut}
Let $R$ be a central simple algebra and let $S$ be an SN algebra. Then every automorphism $\varphi$ of $R\otimes S$ is the composition of an inner automorphism and an automorphism of the form ${\rm id}_R\otimes \sigma$ where $\sigma$ is an automorphism of $S$.
\end{proposition}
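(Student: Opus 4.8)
The plan is to strip off an inner automorphism using the hypothesis that $S$ is SN, and then to identify the remaining automorphism as ${\rm id}_R\otimes\sigma$ by means of a centralizer computation.

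First I would restrict $\varphi$ to the subalgebra $R=R\otimes 1$. Since $R$ is central simple and $\varphi|_R\colon R\to R\otimes S$ is a homomorphism, Definition~\ref{def} supplies an invertible $c\in R\otimes S$ with $\varphi(x)=cxc^{-1}$ for all $x\in R\otimes 1$. Let $\iota_c$ denote conjugation by $c$ and set $\psi:=\iota_c^{-1}\circ\varphi$. Then $\psi$ is an automorphism of $R\otimes S$ fixing $R\otimes 1$ pointwise, and since $\varphi=\iota_c\circ\psi$, it suffices to prove that $\psi={\rm id}_R\otimes\sigma$ for some automorphism $\sigma$ of $S$.

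Next I would determine the centralizer $C$ of $R\otimes 1$ inside $R\otimes S$. Writing a general element as $\sum_i r_i\otimes s_i$ with the $s_i$ linearly independent, the relation $\sum_i(r_ix-xr_i)\otimes s_i=0$ holds for every $x\in R$ if and only if $r_ix=xr_i$ for all $i$ and all $x$, by the elementary linear-independence fact recalled in Section~\ref{s:prelim}; since $R$ is central this forces each $r_i\in F$, so $C=1\otimes S$. Because $\psi$ fixes $R\otimes 1$ pointwise, so does $\psi^{-1}$, and both therefore map $C=1\otimes S$ into itself; hence $\psi$ restricts to an automorphism of $1\otimes S\cong S$, which I call $\sigma$.

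Finally, $R\otimes S$ is generated as an algebra by $R\otimes 1$ and $1\otimes S$, and $\psi$ acts as the identity on the first and as $\sigma$ on the second, so $\psi(r\otimes s)=\psi(r\otimes 1)\,\psi(1\otimes s)=(r\otimes 1)(1\otimes\sigma(s))=r\otimes\sigma(s)$; that is, $\psi={\rm id}_R\otimes\sigma$, and $\varphi=\iota_c\circ({\rm id}_R\otimes\sigma)$ as required. I do not expect a genuine obstacle in this argument — it is largely bookkeeping around the SN hypothesis — the one point that deserves care being the surjectivity of $\sigma$, which is precisely why one observes that $\psi^{-1}$ also fixes $R\otimes 1$ pointwise and hence also preserves the centralizer $1\otimes S$.
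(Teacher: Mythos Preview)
Your proof is correct and follows essentially the same approach as the paper: reduce via the SN hypothesis to an automorphism fixing $R\otimes 1$ pointwise, and use the centrality of $R$ together with the elementary linear-independence fact to see that such an automorphism preserves $1\otimes S$. Your version is in fact slightly more explicit than the paper's, which stops at ``$\varphi$ maps $1\otimes S$ into itself'' without spelling out the surjectivity of $\sigma$; your observation that $\psi^{-1}$ also fixes $R\otimes 1$ and hence also preserves the centralizer is exactly the right way to close that point.
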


\begin{proof}
By assumption, the restriction of $\varphi$ to $R$ can be extended to an inner automorphism $x\mapsto cxc^{-1}$ of $R\otimes S$. Considering the automorphism $x\mapsto 
c^{-1}\varphi(x)c$ we thus see that there is no loss of generality in assuming that $\varphi$ acts as the identity on $R$. Note that the proposition will be proved by showing 
that $\varphi$ 
maps $1\otimes S$ into itself. Pick $s\in S$. We can write $\varphi(1\otimes s)$ as
$\sum_{j} p_j\otimes s_j$
where the $s_j$'s are linearly independent. 
Since $1\otimes s$ commutes with $x\otimes 1$ for every $x\in R$ it follows that so does $\varphi(1\otimes s)$. This implies that
$$\sum_j (p_jx-xp_j)\otimes s_j =0.$$ 
As the $s_j$'s are linearly independent it follows that $p_jx-xp_j=0$ for each $j$ and each $x\in R$.
Hence, since $R$ is central,  each $p_j$ is a scalar multiple of $1$. Consequently, $\varphi(1\otimes s)\in 1\otimes S$.
\end{proof}

If $R=M_n(F)$, then
 $R\otimes S$ can be identified with $M_n(S)$, and the proposition gets the following form.

\begin{corollary}\label{cor:autMn}
If $S$ is an SN algebra, then every automorphism $\varphi$ of $M_n(S)$ is of the form $$\varphi\big((s_{ij})\big) = c\big(\sigma(s_{ij})\big)c^{-1}$$ where $c$ is an invertible element in $M_n(S)$ and $\sigma$ is an automorphism of $S$.
\end{corollary}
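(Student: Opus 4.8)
The plan is to obtain this as an immediate specialization of Proposition \ref{prop:aut} to the central simple algebra $R=M_n(F)$, transported along the canonical identification of $M_n(F)\otimes S$ with $M_n(S)$. First I would record that $M_n(F)$ is indeed a central simple algebra (it is simple, finite-dimensional, and has center $F$), so Proposition \ref{prop:aut} applies and tells us that every automorphism $\varphi$ of $M_n(F)\otimes S$ has the form $x\mapsto c\,({\rm id}_{M_n(F)}\otimes\sigma)(x)\,c^{-1}$ for some invertible $c\in M_n(F)\otimes S$ and some automorphism $\sigma$ of $S$.

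The substantive (but routine) step is to describe the two constituent maps under the algebra isomorphism $\Phi:M_n(F)\otimes S\to M_n(S)$ determined by $\Phi(e_{ij}\otimes s)=sE_{ij}$, where $e_{ij}$ are the matrix units of $M_n(F)$ and $E_{ij}$ the standard matrix units of $M_n(S)$. Conjugation by $c$ on $M_n(F)\otimes S$ becomes conjugation by the invertible matrix $\Phi(c)\in M_n(S)$; and checking on elementary tensors, $\Phi\circ({\rm id}_{M_n(F)}\otimes\sigma)\circ\Phi^{-1}$ sends $\Phi(e_{ij}\otimes s)=sE_{ij}$ to $\sigma(s)E_{ij}$, so by $F$-linearity it is exactly the entrywise map $(s_{ij})\mapsto(\sigma(s_{ij}))$. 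Composing, and renaming $c:=\Phi(c)$, gives $\varphi\big((s_{ij})\big)=c\big(\sigma(s_{ij})\big)c^{-1}$, which is the asserted form.

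I do not expect a genuine obstacle here: the corollary is essentially a transcription of Proposition \ref{prop:aut} through a well-known identification. The only points meriting a sentence of care are that $\Phi$ is an algebra isomorphism (not merely $F$-linear) and that it intertwines ${\rm id}_{M_n(F)}\otimes\sigma$ with the entrywise action of $\sigma$; both are standard. I would also note that $\sigma$ is automatically an automorphism of $S$ because Proposition \ref{prop:aut} already produces it as one, so no extra verification of surjectivity is needed.
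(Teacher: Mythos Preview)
Your proposal is correct and follows exactly the paper's approach: the corollary is presented there as an immediate specialization of Proposition~\ref{prop:aut} to $R=M_n(F)$ via the canonical identification $M_n(F)\otimes S\cong M_n(S)$. You have supplied more detail than the paper does (it simply states ``the proposition gets the following form''), but the substance is identical.
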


This result is known in the case where $S$ is either an artinian algebra \cite[Theorem 3.13]{BO}, a UFD \cite[Corollary 15]{I}, or a commutative local algebra (see, e.g., \cite[p. 163]{K}). As we will see,  all these algebras are SN algebras. On the other hand, \cite{I} 
shows that the commutative domain $\mathbb Z[\sqrt{-5}]$ does not satisfy the conclusion of Corollary \ref{cor:autMn}.
We give an algebra with the same property in Example \ref{ex:elliptic}.

\section{Basic lemma}\label{s:basic}

All our main results will be derived from the following technical lemma. Its proof  will use some ideas from the proof of the (special case of)   Skolem-Noether theorem  given in \cite[pp. 13--14]{INCA}.

\begin{lemma}\label{l}
Let $R$ be a  central simple algebra with basis $\{r_1,\dots,r_d\}$ and let $S$ be an arbitrary algebra. Then  $\varphi:R\to R\otimes S$ is a homomorphism 
if and only if there exist   $c_1,\dots,c_d\in R\otimes S$ such that
\begin{enumerate}
\item[(a)] $\varphi(x)=\sum_{k=1}^d c_k x r_{k}$ for all $x\in R$,
\item[(b)] $\varphi(x)c_k=c_kx$ for all $x\in R$, and
\item[(c)] $\sum_{k=1}^d c_kr_k = 1.$
\end{enumerate}
Moreover, writing $c_k= \sum_{l=1}^{d} r_{l}\otimes s_{kl}$, we 
have that for each $k$ and $l$ there exists $b_{kl}\in R\otimes S$ such that $$b_{kl}c_k = 1\otimes s_{kl}.$$ Accordingly, if $S$ is stably finite and there exist $k$ and $l$ such that  $s_{kl}$ is invertible in $S$, then $c=c_k$ is invertible in $R\otimes S$ and $$\varphi(x)=cxc^{-1}$$ for all $x\in R$. 
\end{lemma}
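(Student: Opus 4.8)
The plan is to establish the stated equivalence by proving the two implications separately, and then to deduce the two supplementary assertions. The implication ``if'' is a short formal check: granting (a)--(c), the map $\varphi$ is $F$-linear by (a), it is unital because $\varphi(1)=\sum_kc_kr_k=1$ by (c), and it is multiplicative by the one-line computation $\varphi(x)\varphi(y)=\varphi(x)\sum_kc_kyr_k=\sum_k(\varphi(x)c_k)yr_k=\sum_kc_kxyr_k=\varphi(xy)$, where the third equality uses (b).

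For the implication ``only if'', suppose $\varphi$ is a homomorphism. For each index $k$ I would first reorder the basis so that $r_k$ comes first and apply \eqref{oh} to obtain $w^{(k)}_j,z^{(k)}_j\in R$ with $\sum_jw^{(k)}_jr_mz^{(k)}_j=\delta_{km}$ for all $m$; equivalently, the operator $h_k:=\sum_jL_{w^{(k)}_j}R_{z^{(k)}_j}$ on $R$ sends $x$ to $\lambda_k(x)\cdot 1$, where $\lambda_k$ is the $k$-th coordinate functional of the basis. I would then set $c_k:=\sum_j\varphi(w^{(k)}_j)\,(z^{(k)}_j\otimes 1)\in R\otimes S$. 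The main tools will be the auxiliary $F$-linear map $\Theta\colon R\otimes R\to R\otimes S$, $a\otimes b\mapsto\varphi(a)(b\otimes 1)$, and the fact that the algebra homomorphism $\mu\colon R\otimes R^{\mathrm{op}}\to\mathrm{End}_F(R)$, $a\otimes b^{\mathrm{op}}\mapsto L_aR_b$, is bijective when $R$ is central simple (surjectivity being the version of \cite[Lemma 1.25]{INCA} recalled above, injectivity following by a dimension count).

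The crux — and the step I expect to be the main obstacle — is verifying (b). Writing out $\varphi(x)c_k$ and $c_kx=c_k(x\otimes 1)$ shows that (b) amounts to $\Theta\big(\sum_jxw^{(k)}_j\otimes z^{(k)}_j\big)=\Theta\big(\sum_jw^{(k)}_j\otimes z^{(k)}_jx\big)$, so it suffices to prove the tensor identity $\sum_jxw^{(k)}_j\otimes z^{(k)}_j=\sum_jw^{(k)}_j\otimes z^{(k)}_jx$ in $R\otimes R$. Under $\mu$ the two sides map to $L_x\circ h_k$ and $R_x\circ h_k$, and these agree precisely because $h_k$ takes values in the center $F\cdot 1$ of $R$; injectivity of $\mu$ then yields the tensor identity. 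The same device gives (c): $\sum_kc_kr_k=\Theta\big(\sum_{k,j}w^{(k)}_j\otimes z^{(k)}_jr_k\big)$, while $\sum_{k,j}w^{(k)}_j\otimes z^{(k)}_jr_k=1\otimes 1$ because its $\mu$-image is $\sum_kR_{r_k}\circ h_k=\mathrm{id}_R=\mu(1\otimes 1)$, so $\sum_kc_kr_k=\Theta(1\otimes 1)=\varphi(1)=1$. Property (a) is then automatic: $\varphi(x)=\varphi(x)\sum_kc_kr_k=\sum_k(\varphi(x)c_k)r_k=\sum_kc_kxr_k$ by (b) and (c).

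For the ``Moreover'' clause I would recover $s_{kl}$ from $c_k=\sum_lr_l\otimes s_{kl}$ by sandwiching: $\sum_j(w^{(l)}_j\otimes 1)\,c_k\,(z^{(l)}_j\otimes 1)=\sum_m\big(\sum_jw^{(l)}_jr_mz^{(l)}_j\big)\otimes s_{km}=1\otimes s_{kl}$, and then use (b) in the form $c_k(z^{(l)}_j\otimes 1)=\varphi(z^{(l)}_j)c_k$ to rewrite the left-hand side as $b_{kl}c_k$ with $b_{kl}:=\sum_j(w^{(l)}_j\otimes 1)\varphi(z^{(l)}_j)$. For the final claim, if some $s_{kl}$ is invertible in $S$ then $(1\otimes s_{kl}^{-1})b_{kl}$ is a left inverse of $c:=c_k$; since $R$ is central simple, $R\otimes S$ is a unital subalgebra of $(R\otimes R^{\mathrm{op}})\otimes S\cong M_d(S)$, which is stably finite whenever $S$ is, so this left inverse is automatically two-sided and $c$ is invertible in $R\otimes S$. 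Then (b) gives $\varphi(x)=cxc^{-1}$ for all $x\in R$, as required.
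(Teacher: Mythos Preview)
Your proof is correct, and it follows a genuinely different path to the ``only if'' direction than the paper does. The paper first establishes (a) directly: it writes $\varphi(x)=\sum_i f_i(x)\otimes s_i$ for linear maps $f_i$, expresses each $f_i$ as a sum of $L_wR_z$'s via surjectivity of $\mu$, and collects terms against the basis $\{r_k\}$ to obtain $c_k$. It then deduces (b) by expanding $\varphi(xy)=\varphi(x)\varphi(y)$ as $\sum_k c_kxyr_k=\sum_k\varphi(x)c_kyr_k$ and contracting against the dual elements $w_j,z_j$ on the right, and (c) from $\varphi(1)=1$. You instead \emph{define} $c_k=\sum_j\varphi(w^{(k)}_j)(z^{(k)}_j\otimes 1)$ at the outset, prove (b) first by reducing it to a tensor identity in $R\otimes R$ and invoking the \emph{injectivity} of $\mu$ (a tool the paper never uses), then get (c) by the same device, and finally read off (a) from (b) and (c). Your approach is a bit more conceptual, packaging the dual-element manipulations into the single linear map $\Theta$ and the isomorphism $\mu$; the paper's approach is more hands-on but avoids appealing to injectivity of $\mu$. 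Incidentally, the two constructions of $c_k$ coincide: contracting the paper's (a) against $w^{(k)}_j,z^{(k)}_j$ recovers your formula. For the ``Moreover'' clause and the $b_{kl}$'s your argument is the same as the paper's. For the stably-finite step the paper simply cites Montgomery's theorem \cite{Mon}, whereas you supply a self-contained argument via the unital embedding $R\otimes S\hookrightarrow (R\otimes R^{\mathrm{op}})\otimes S\cong M_d(S)$; this is a nice elementary substitute.
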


\begin{proof}
Since $R$ is finite-dimensional, there exist finitely many $s_i\in S$ and 
linear maps $f_i:R\to R$
 such that
$$\varphi(x)= \sum_i f_i(x)\otimes s_i
$$
for all $x\in S$. By  \cite[Lemma 1.25]{INCA}  there exist $w_{ij}, z_{ij}\in R$ such that 
$$f_i = \sum_{j} L_{w_{ij}}R_{z_{ij}}.$$
 Consequently, for every $x\in R$ we have
\begin{align*}
\varphi(x) &=\sum_i \Big( \sum_{j} w_{ij}xz_{ij}\Big)\otimes s_i\\
&=\sum_i \sum_{j} (w_{ij}\otimes s_i)x z_{ij}.
\end{align*}
Writing each $z_{ij}$ as a linear combination of $r_1,\dots,r_d$ we see that $\varphi$ is of the form described in (a).

 We now use the multiplicativity of  $\varphi$, i.e., 
 $\varphi(xy)= \varphi(x)\varphi(y)$ for all $x,y\in R$. In view of (a) we can rewrite this as 
 \begin{equation} \label{nm} \sum_{k=1}^d c_kxyr_k = \sum_{k=1}^d  \varphi(x) c_k yr_k. \end{equation}
 Pick  $w_j,z_j\in R$  such that \eqref{oh} holds. 
Setting $y= w_j$ in  \eqref{nm}, 
multiplying  the identity, so obtained, from the right by $z_j$,  and then summing up over all $j$ we get
 $$
 \sum_j  \sum_{k=1}^d c_kx w_j r_k z_j = \sum_j  \sum_{k=1}^d \varphi(x) c_k w_j r_k z_j,
 $$
 that is,
  $$
 \sum_{k=1}^d c_kx \Big(\sum_j w_j r_k z_j\Big) =  \sum_{k=1}^d \varphi(x) c_k \Big(\sum_j w_j r_k z_j\Big).
 $$
 By \eqref{oh} this reduces to $c_1x = \varphi(x)c_1$. Of course,  the same proof applies  to every $c_k$, so (b) holds. Finally, (c) follows 
from $\varphi(1)=1$.

A direct verification shows that  (a), (b), and (c) imply that $\varphi$ is a homomorphism.

Let us write $c_1 = \sum_l r_{l}\otimes s_{1l}$, and let $w_j,z_j$ be   as above. Using (b) we obtain
\begin{align*}
\sum_j w_j\varphi(z_j) c_1 &= \sum_j w_jc_1z_j\\
&= \sum_j\sum_l w_j ( r_{l}\otimes s_{1l})z_j\\
&=\sum_l\Big(\sum_j w_jr_{l}z_j\Big)\otimes s_{1l}\\
&= 1\otimes s_{11}.
\end{align*}
Thus, $b_{11}= \sum_j w_j\varphi(z_j)$ satisfies $b_{11}c_1 = 1\otimes s_{11}$. Similarly we find other $b_{kl}$'s. 

Finally, assume that $s_{kl}$ is invertible in $S$ for some  $k$ and $l$. Then 
$(1\otimes s_{kl}^{-1})b_{kl}$ is a left inverse of $c_k$. If $S$ is stably finite, then the result by Montgomery  \cite[Theorem 1]{Mon} implies that
this element is also a right inverse. Therefore, (b) shows that $c=c_k$ satisfies  $\varphi(x)=cxc^{-1}$ for all $x\in R$. 
\end{proof}

We continue with a simple application
of Lemma \ref{l}, showing that local algebras
are SN. This result will be generalized 
to semilocal algebras (with a considerably more involved proof)
in the next section, see Theorem \ref{thm:semiLoc}.

\begin{corollary}\label{t1}
Every local algebra  is an SN algebra.
\end{corollary}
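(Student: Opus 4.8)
The plan is to apply Lemma \ref{l} with the stable finiteness hypothesis in mind, and then leverage the fact that in a local algebra the non-invertible elements form the Jacobson radical, which is the unique maximal ideal. So the strategy reduces to: given a homomorphism $\varphi:R\to R\otimes S$ with $R$ central simple, produce some $s_{kl}$ that is invertible in $S$, and then quote the last sentence of Lemma \ref{l}.

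First I would recall that a local algebra $S$ is stably finite. This is standard: if $S$ is local then $M_n(S)$ is semiperfect, and over a semiperfect (more elementarily, over any local) ring a one-sided inverse is two-sided; alternatively one notes $S/\mathrm{rad}(S)$ is a division algebra $D$, and $M_n(D)$ is artinian hence stably finite, and lifting idempotents/units through the radical (which is nil on the relevant finitely generated pieces, or one uses that $1+\mathrm{rad}$ consists of units) shows $M_n(S)$ is stably finite. Either way, the stable-finiteness hypothesis of Lemma \ref{l} is met, so it suffices to find an invertible entry.

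Next, by Lemma \ref{l}(c) we have $\sum_{k=1}^d c_k r_k = 1$ in $R\otimes S$. Write each $c_k = \sum_{l=1}^d r_l\otimes s_{kl}$ as in the lemma. Expanding $\sum_k c_k r_k$ in terms of the basis $\{r_1,\dots,r_d\}$ of $R$, the coefficient of $r_1$ (the basis element we may take to be $1$, or more carefully: express $r_l r_k$ in the basis and collect the coefficient of $1$) must be $1\in S$, while all other coefficients vanish. Thus $1$ is an $S$-linear combination of the $s_{kl}$'s. Since $S$ is local, $\mathrm{rad}(S)$ is the set of non-units and $S/\mathrm{rad}(S)$ is a division algebra; if every $s_{kl}$ lay in $\mathrm{rad}(S)$ then the linear combination $1$ would also lie in $\mathrm{rad}(S)$, a contradiction. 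Hence some $s_{kl}$ is invertible in $S$.

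The last step is immediate: with this invertible $s_{kl}$, the final assertion of Lemma \ref{l} gives that $c:=c_k$ is invertible in $R\otimes S$ and $\varphi(x)=cxc^{-1}$ for all $x\in R$, so $S$ is an SN algebra. I do not expect any serious obstacle here; the only point needing a little care is the bookkeeping that turns Lemma \ref{l}(c) into the statement ``$1$ is an $S$-combination of the $s_{kl}$'s'' — one should either choose the basis so that $r_1=1$ and read off coefficients of $1$ directly, or invoke the structure-constant expansion and the linear independence of the $r_i\otimes 1$ over $S$ (the elementary fact recorded around \eqref{nov}). The stable finiteness of local algebras is the only external ingredient, and it is well known; I would cite it or give the one-line argument via $S/\mathrm{rad}(S)$ being a division algebra together with units lifting modulo the radical.
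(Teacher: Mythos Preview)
Your proof is correct and follows essentially the same route as the paper's: expand (c) of Lemma~\ref{l} to see that $1$ lies in the ($F$-)linear span of the $s_{kl}$, deduce that some $s_{kl}\notin\mathrm{rad}(S)$ and hence is a unit, then invoke the final clause of Lemma~\ref{l} together with the stable finiteness of local algebras (the paper simply cites \cite[Theorem~20.13]{Lam} for the latter). The only cosmetic differences are that the paper appeals directly to the observation at \eqref{nov} rather than computing structure constants, and that the span in question is actually $F$-linear rather than $S$-linear---though either suffices since $\mathrm{rad}(S)$ is an ideal.
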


\begin{proof}
Let $r_k,s_{kl}$ be elements from Lemma \ref{l}. From (c) it follows that  $$\sum_{k,l} r_{l}r_k\otimes s_{kl} = 1\otimes 1.$$ 
This implies that
$1$ lies in the linear span of $s_{kl}$. Consequently, at least one $s_{kl}$  does not lie in rad$(S)$. Since $S$ is local it follows that $s_{kl}$  is invertible in $S$. 
As  $S$ is stably finite  \cite[Theorem 20.13]{Lam},  the last assertion of Lemma \ref{l} shows that there exists $c\in R\otimes S$ such that 
$\varphi(x)=cxc^{-1}$ for all $x\in R$.
\end{proof}

\section{Semilocal algebras}\label{s:semiloc}

The main result of this section is Theorem
\ref{thm:semiLoc} showing that
semilocal algebras are SN. We begin with a simple lemma.

\begin{lemma}\label{prod}
If  $S_1$ and $S_2$ are SN algebras, then so is their direct product $S_1\times S_2$.
\end{lemma}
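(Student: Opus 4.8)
The plan is to take a central simple algebra $R$ and a homomorphism $\varphi\colon R\to R\otimes(S_1\times S_2)$, and reduce it to the two given SN algebras by the natural projections. First I would use the canonical isomorphism $R\otimes(S_1\times S_2)\cong (R\otimes S_1)\times(R\otimes S_2)$, under which $\varphi$ decomposes as a pair $(\varphi_1,\varphi_2)$, where $\varphi_i\colon R\to R\otimes S_i$ is the composition of $\varphi$ with the $i$-th projection. Each $\varphi_i$ is a unital algebra homomorphism, so since $S_i$ is an SN algebra there exist invertible $c_i\in R\otimes S_i$ with $\varphi_i(x)=c_i x c_i^{-1}$ for all $x\in R$.

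Next I would assemble $c:=(c_1,c_2)\in (R\otimes S_1)\times(R\otimes S_2)\cong R\otimes(S_1\times S_2)$. Since each $c_i$ is invertible in $R\otimes S_i$, the pair $c$ is invertible in the product with inverse $(c_1^{-1},c_2^{-1})$; here one should note that invertibility in a finite direct product is checked componentwise, which is immediate. Then for $x\in R=R\otimes 1$, identified with $(x\otimes 1,x\otimes 1)$, we get $cxc^{-1}=(c_1(x\otimes1)c_1^{-1},\,c_2(x\otimes1)c_2^{-1})=(\varphi_1(x),\varphi_2(x))=\varphi(x)$. Hence $\varphi$ is realized by conjugation with an invertible element, so $S_1\times S_2$ is SN.

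The only point requiring a little care — and the mildest "obstacle," though it is routine — is bookkeeping with the identification $R\otimes(S_1\times S_2)\cong (R\otimes S_1)\times(R\otimes S_2)$: one must check that this is an $F$-algebra isomorphism carrying $R\otimes 1$ diagonally into the two copies of $R\otimes 1$, and that it is compatible with the two projections. This is standard (tensor product commutes with finite direct products of algebras), so the proof is short. No appeal to stable finiteness or to Lemma \ref{l} is needed; the argument is purely formal once the two invertible conjugators $c_1,c_2$ are supplied by the SN hypothesis on the factors.
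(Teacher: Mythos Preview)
Your proof is correct and follows essentially the same approach as the paper's: use the identification $R\otimes(S_1\times S_2)\cong(R\otimes S_1)\times(R\otimes S_2)$, split $\varphi$ into components $\varphi_i$, apply the SN hypothesis to each factor to obtain $c_i$, and set $c=(c_1,c_2)$. The paper omits the bookkeeping remarks you flag, but otherwise the arguments coincide.
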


\begin{proof}
Recall that $R\otimes (S_1\times S_2)$ can be identified with $(R\otimes S_1)\times (R\otimes S_2)$. Take a homomorphism $$\varphi:R\to (R\otimes S_1)\times (R\otimes S_2).$$
Writing $$\varphi(x)=(\varphi_1(x),\varphi_2(x))$$ it is immediate that $\varphi_i$ is a homomorphism from $R$ into $R\otimes S_i$, $i=1,2$. By assumption,  there exist $c_i\in R\otimes S_i$  such that
$\varphi_i(x)= c_ixc_i^{-1}$ for all $x\in R$, $i=1,2$. Hence, $$c=(c_1,c_2)\in (R\otimes S_1)\times (R\otimes S_2)$$ satisfies $\varphi(x)=cxc^{-1}$ for all $x\in R$.
\end{proof}

As mentioned in the introduction, the Skolem-Noether theorem implies that every central simple algebra is an SN algebra. 
With a little extra effort we can extend this to  semisimple algebras.

\begin{lemma}\label{lsemi}
Every semisimple algebra is an SN algebra.
\end{lemma}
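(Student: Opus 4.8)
The plan is to reduce the semisimple case to the already-known central simple case by carefully decomposing the homomorphism over the Wedderburn blocks of $S$. Write $S\cong \prod_{i=1}^m S_i$ with each $S_i$ a simple artinian algebra, i.e. $S_i\cong M_{n_i}(D_i)$ for division algebras $D_i$. By Lemma \ref{prod} it suffices to prove the claim when $S$ itself is simple artinian; so assume $S=M_n(D)$ with $D$ a division algebra. The subtlety is that $S$ need not be central over $F$: its center $K=Z(D)$ is a finite field extension of $F$, so $R\otimes_F S$ need not be simple, and the classical Skolem-Noether theorem does not apply verbatim. I would handle this by splitting scalars: $R\otimes_F S \cong (R\otimes_F K)\otimes_K S$, and decomposing $R\otimes_F K$ according to how the simple algebra $R$ behaves after the base extension $F\hookrightarrow K$.

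Concretely, first I would record that a homomorphism $\varphi\colon R\to R\otimes_F S$ is automatically $K$-linear once we push $K=Z(S)$ into $R\otimes_F S$ via $1\otimes(\,\cdot\,)$, because $\varphi(R)$ and the copy of $1\otimes K$ need not commute in general — so instead I would work block by block. Since $S=M_n(D)$ is simple artinian with center $K$, and $R\otimes_F K$ is a finite-dimensional $K$-algebra, write $R\otimes_F K\cong \prod_{t} A_t$ as a product of central simple $K$-algebras $A_t$ (this uses that $R$ is central simple over $F$, hence $R\otimes_F K$ is central simple over $K$ when $R$ stays simple — in fact $R\otimes_F K$ is always central simple over $K$ since $R$ is central simple over $F$, so there is only one block $A=R\otimes_F K$). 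Then $R\otimes_F S\cong A\otimes_K S$, and both $A$ and $S$ are central simple over $K$, so $A\otimes_K S$ is again central simple over $K$. Now the composite $R\xrightarrow{\varphi} R\otimes_F S = A\otimes_K S$ lands in a central simple $K$-algebra, but its domain $R$ is only an $F$-algebra, not a $K$-algebra.

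To finish, I would extend $\varphi$ to a $K$-algebra homomorphism $\widehat\varphi\colon A=R\otimes_F K\to A\otimes_K S$ by sending $r\otimes\lambda \mapsto \varphi(r)\cdot(1\otimes\lambda)$, where $1\otimes\lambda$ denotes the image of $\lambda\in K$ under the central embedding of $K$ into $A\otimes_K S$. One must check this is well-defined and multiplicative: well-definedness is $F$-bilinearity, and multiplicativity follows because the image of $K$ is central in $A\otimes_K S$, so $\varphi(r)\varphi(r')\cdot(1\otimes\lambda\lambda') = \widehat\varphi(r\otimes\lambda)\widehat\varphi(r'\otimes\lambda')$. Then $\widehat\varphi$ is a $K$-algebra homomorphism from the central simple $K$-algebra $A$ into $A\otimes_K S$, which is central simple over $K$ and finite-dimensional, so the Skolem-Noether theorem (in the form quoted in the excerpt, applied over the base field $K$) gives an invertible $c\in A\otimes_K S = R\otimes_F S$ with $\widehat\varphi(a)=cac^{-1}$ for all $a\in A$; restricting to $R=R\otimes 1\subseteq A$ yields $\varphi(x)=cxc^{-1}$ for all $x\in R$, as required.

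The main obstacle I anticipate is precisely the base-change bookkeeping: making sure that $R\otimes_F K$ really is central simple over $K$ (standard, since $R$ is central simple over $F$), that $A\otimes_K S$ is central simple and finite-dimensional over $K$ so that Skolem-Noether over $K$ applies, and — most delicately — that the extension $\widehat\varphi$ is well-defined and a genuine ring homomorphism, which hinges on the image of the new scalars $K$ being central in the target. An alternative, possibly cleaner route that avoids field extensions entirely is to invoke Proposition \ref{p:mat}: it suffices to show every $\varphi\colon M_n(F)\to M_n(S)$ with $S$ semisimple is conjugation; decomposing $S$ over its blocks via Lemma \ref{prod} reduces to $S$ simple artinian, and then one uses the double centralizer / matrix-unit argument (the standard proof of Skolem-Noether via finding a system of matrix units $\varphi(e_{ij})$ in $M_n(S)$ and building the intertwiner $c$ out of them). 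I would present the first approach as the main line since it most directly leverages the classical theorem, and mention the matrix-unit argument only if the scalar-extension details become unwieldy.
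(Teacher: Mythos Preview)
Your approach is essentially the paper's own: reduce to $S$ simple artinian via Lemma~\ref{prod}, extend $\varphi$ $K$-linearly (where $K=Z(S)$) to a $K$-algebra map $\Phi\colon R\otimes_F K\to R\otimes_F S$, $\Phi(x\otimes k)=\varphi(x)(1\otimes k)$, and invoke Skolem--Noether over $K$. One correction: neither $K/F$ nor $R\otimes_F S$ over $K$ need be finite-dimensional in general, but $R\otimes_F S$ is simple \emph{artinian} with center $K$ (the paper checks artinianity by viewing $R\otimes S$ as the left $S$-module $S^d$), and that is exactly the hypothesis of the Skolem--Noether theorem as quoted---so replace your ``finite-dimensional'' claim by ``simple artinian'' and the argument goes through verbatim.
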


\begin{proof}
In view of Lemma \ref{prod} it suffices to consider the case where $S$ is  simple artinian. 
Let $R$ be a central simple algebra. The algebra $R\otimes S$
is then simple  \cite[Theorem 4.42]{INCA}. 
We claim that it is  also artinian. Indeed, considering $R\otimes S$ as a left $S$-module in the natural way  we see that
it is isomorphic to the  left $S$-module $S^d$ where $d$ is the dimension of $R$, and that
 a descending chain of left ideals of $R\otimes S$
is also a descending chain of left $S$-submodules. The desired conclusion thus follows from the fact that   $S^d$ is  artinian.    

Let $K$ be the center of $S$. 
The center of $R\otimes S$  is equal to $1\otimes K$ \cite[Corollary 4.32]{INCA}, which we     identify with $K$. Consider $R\otimes K$ as an algebra over $K$ in the usual way. Clearly, 
it is finite-dimensional and, again by \cite[Theorem 4.42]{INCA}, simple.

Now, given a homomorphism $\varphi:R\to R\otimes S$, we define $$\Phi:R\otimes K\to R\otimes S$$ by $$\Phi(x\otimes k)= \varphi(x)(1\otimes k).$$ Note that $\Phi$ is a $K$-algebra homomorphism. The Skolem-Noether theorem thus tells us that there exists $c\in R\otimes S$ such that
$\Phi(x\otimes k)= c(x\otimes k)c^{-1}$ for all $x\in R$, $k\in K$. Setting $k=1$ we get the desired conclusion.
\end{proof}

Our goal is to show that semilocal algebras are SN algebras by reducing the general case to the semisimple case. We will actually prove a general reduction theorem whose possible applications are not limited to  semilocal algebras. To this end, we need the following lemma. From its nature one would expect that it is known,  but we were unable to find a good reference. 
We include a short proof for the sake of completeness.

\begin{lemma}\label{lemrad}
If $R$ is a central simple algebra, then ${\rm rad}(R\otimes S) = R\otimes {\rm rad}(S)$ for every algebra $S$.
\end{lemma}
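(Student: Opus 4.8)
The plan is to prove the two inclusions $R\otimes{\rm rad}(S)\subseteq{\rm rad}(R\otimes S)$ and ${\rm rad}(R\otimes S)\subseteq R\otimes{\rm rad}(S)$ separately, exploiting that $R$ is central simple and finite-dimensional so that $R\otimes S$ is a free $S$-module of finite rank and $R\otimes(S/{\rm rad}(S))\cong(R\otimes S)/(R\otimes{\rm rad}(S))$.

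For the inclusion $R\otimes{\rm rad}(S)\subseteq{\rm rad}(R\otimes S)$, I would first observe that $R\otimes{\rm rad}(S)$ is a two-sided ideal of $R\otimes S$. To see it consists of quasi-regular elements (hence lies in the radical), I would use that the Jacobson radical of a ring is the largest ideal $I$ such that $1+x$ is invertible for every $x\in I$; alternatively, one can pick a basis $\{r_1,\dots,r_d\}$ of $R$ and argue via the regular representation of $R$ on itself, realizing $R\otimes S$ inside $M_d(S)$ as $S$-linear combinations of the (commuting-with-$S$) matrices representing the $r_i$, and noting ${\rm rad}(M_d(S))=M_d({\rm rad}(S))$ together with $(R\otimes{\rm rad}(S))\subseteq M_d({\rm rad}(S))$ under this embedding, which is a nil-like/quasiregular ideal. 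A cleaner route: since $R$ is finite-dimensional, $R\otimes{\rm rad}(S)$ is an ideal all of whose elements $z$ satisfy: the left ideal $(R\otimes S)z$ is "small" because, writing $z=\sum_l r_l\otimes a_l$ with $a_l\in{\rm rad}(S)$, any $1+z$ has a two-sided inverse obtained by a geometric-series argument controlled by nilpotency modulo annihilators — but to avoid chasing this, I would instead simply invoke that $R\otimes{\rm rad}(S)$ is an ideal and that the quotient $(R\otimes S)/(R\otimes{\rm rad}(S))\cong R\otimes(S/{\rm rad}(S))$ is semiprimitive once we know the radical of $R\otimes T$ is zero for $T$ semiprimitive; this is the content of the other inclusion applied in the limiting/quotient form, so the two inclusions are really one argument.

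Concretely, then, the main step is: if $T$ is an algebra with ${\rm rad}(T)=0$, then ${\rm rad}(R\otimes T)=0$ for $R$ central simple. Here I would use the structure of $R$: after passing to a splitting field — or more elementarily, using that $R\otimes R^{\mathrm{op}}\cong M_n(F)$ where $n=\dim_F R$ — one reduces to the matrix case, since ${\rm rad}(M_n(T))=M_n({\rm rad}(T))=0$. More precisely, $R\otimes T$ embeds as a subring-with-same-identity? — not quite, so instead I use: ${\rm rad}$ behaves well under the functor $-\otimes_F R^{\mathrm{op}}$ because $R\otimes R^{\mathrm{op}}\otimes T\cong M_n(T)$ has zero radical, and ${\rm rad}(R\otimes T)\otimes R^{\mathrm{op}}\subseteq{\rm rad}((R\otimes T)\otimes R^{\mathrm{op}})={\rm rad}(M_n(T))=0$ (using that for a finite-dimensional separable, in particular central simple, algebra $A$, ${\rm rad}(B\otimes A)={\rm rad}(B)\otimes A$ for the easy direction, or directly that tensoring a faithful module argument preserves the radical being zero). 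Since $R^{\mathrm{op}}$ is a faithful flat (free) $F$-module, ${\rm rad}(R\otimes T)\otimes R^{\mathrm{op}}=0$ forces ${\rm rad}(R\otimes T)=0$.

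I expect the main obstacle to be handling the case where $F$ is not a splitting field for $R$ and keeping the argument characteristic-free and base-field-agnostic — in particular justifying cleanly that ${\rm rad}$ commutes with $\otimes_F R$ in the direction needed. The trick $R\otimes_F R^{\mathrm{op}}\cong M_n(F)$, valid for any central simple algebra over any field, is what sidesteps this: it lets me reduce every statement about $R\otimes S$ to one about $M_n(S)$, where ${\rm rad}(M_n(S))=M_n({\rm rad}(S))$ is standard. Assembling: from ${\rm rad}(M_n(S))=M_n({\rm rad}(S))$ and $M_n(S)\cong R\otimes S\otimes R^{\mathrm{op}}$ I would extract both $R\otimes{\rm rad}(S)\subseteq{\rm rad}(R\otimes S)$ (push $M_n({\rm rad}(S))$ down) and the reverse inclusion (its image in $M_n(S)$ lands in $M_n({\rm rad}(S))$, and the flat base change by $R^{\mathrm{op}}$ is injective enough to conclude), completing the proof.
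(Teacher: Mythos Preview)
Your core idea --- exploiting $R\otimes_F R^{\mathrm{op}}\cong M_d(F)$ (with $d=\dim_F R$) to reduce everything to the standard identity ${\rm rad}(M_d(S))=M_d({\rm rad}(S))$ --- is correct and is \emph{not} the route taken in the paper. The paper proves ${\rm rad}(R\otimes S)\subseteq R\otimes{\rm rad}(S)$ first, very cheaply, by using that every ideal of $R\otimes S$ has the form $R\otimes I$ and then checking $I\subseteq{\rm rad}(S)$ via the quasi-regularity criterion. For the reverse inclusion the paper passes to a finite \emph{separable} splitting field $K$ of $R$ and invokes two results from Lam: ${\rm rad}(K\otimes S)=K\otimes{\rm rad}(S)$ and ${\rm rad}(R\otimes S)=(R\otimes S)\cap{\rm rad}(K\otimes R\otimes S)$. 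Your $R^{\mathrm{op}}$ trick avoids field extensions and outside citations entirely, which is a genuine gain; it is also amusing that you and the paper disagree on which inclusion is the ``easy'' one.

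There is, however, one real gap you should close. In your argument for $R\otimes{\rm rad}(S)\subseteq{\rm rad}(R\otimes S)$ you embed $R\otimes S\hookrightarrow M_d(S)$ via the regular representation and observe that $R\otimes{\rm rad}(S)$ lands inside $M_d({\rm rad}(S))={\rm rad}(M_d(S))$. From this you conclude that $R\otimes{\rm rad}(S)$ is a quasi-regular ideal of $R\otimes S$. But an ideal of a subring that sits inside the radical of an overring need not lie in the radical of the subring (take $p\mathbb Z\subset\mathbb Z\subset\mathbb Z_{(p)}$). What you need is that the embedding $R\otimes S\hookrightarrow M_d(S)$ \emph{reflects units}: if $y\in R\otimes S$ is invertible in $M_d(S)$ then $y^{-1}\in R\otimes S$. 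This is true, and the reason is precisely your $R\otimes R^{\mathrm{op}}\cong M_d(F)$ observation: under it, the image of $R\otimes S$ in $M_d(S)$ is the centralizer of the copy of $R^{\mathrm{op}}$ sitting in $M_d(F)\subset M_d(S)$, and inverses of elements in a centralizer stay in the centralizer. Once this is said, your derivation of the other inclusion (tensor ${\rm rad}(R\otimes S)$ with $R^{\mathrm{op}}$, land inside ${\rm rad}(M_d(S))=M_d({\rm rad}(S))=R\otimes{\rm rad}(S)\otimes R^{\mathrm{op}}$, and use faithful flatness of $-\otimes_F R^{\mathrm{op}}$ to strip the $R^{\mathrm{op}}$) goes through cleanly. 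Be careful not to phrase the first step as ``${\rm rad}(B)\otimes A\subseteq{\rm rad}(B\otimes A)$ is known for separable $A$'': that is the lemma itself, so you must actually prove it via the centralizer argument above to avoid circularity.
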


\begin{proof}
	As an ideal of $R\otimes S$, ${\rm rad}(R\otimes S)$ is necessarily of the form 
	$R\otimes I$ for some ideal $I$ of $S$ \cite[Theorem 4.42]{INCA}.
	We will show that $I\subseteq {\rm rad}(S)$, by making use of the following  characterization of rad$(A)$:
	 $v\in {\rm rad}(A)$ if and only if
	$1- vx$ is invertible for every $x\in A$.
Take  $u\in I$. Since $1\otimes u \in {\rm rad}(R\otimes S)$ it follows that 

	$$1\otimes (1-ux)=1\otimes 1 - 1\otimes ux = 1\otimes 1 - (1\otimes u)(1\otimes x)$$
	is invertible in $R\otimes S$ for every $x\in S$. However, this is possible only if $1-ux$ is invertible, implying that $u\in {\rm rad}(S)$. Thus, $I\subseteq {\rm rad}(S)$,
	and so $ {\rm rad}(R\otimes S)\subseteq R\otimes {\rm rad}(S)$.
	
	As the lemma is well-known if $R=M_n(F)$ (see, e.g., \cite[pp. 57-58]{Lam}),
we will establish the converse inclusion  by reducing the general case to this one.
Take a splitting field $K$ for $R$ which is a finite separable extension of $F$ (see, e.g., \cite[Proposition 4.5.4]{GS}). Then  $K\otimes R$ may be identified with  $M_n(K)$ for some $n\ge 1$, and, therefore, $K\otimes R\otimes S$ may be identified with $M_n(K\otimes S)$. Thus, by what we pointed out at the beginning of the paragraph, we have
$${\rm rad}(K\otimes R\otimes S) = M_n({\rm rad}(K\otimes S)).$$ 
By \cite[Theorem 5.17]{Lam}, ${\rm rad}(K\otimes S) = K\otimes {\rm rad}(S)$, so that
\begin{equation}\label{rs}{\rm rad}(K\otimes R\otimes S) = M_n(K)\otimes {\rm rad}(S).\end{equation} 
According to  \cite[Theorem 5.14]{Lam},
$${\rm rad}(R\otimes S) = (R\otimes S)\cap {\rm rad}(K\otimes R\otimes S),$$
and hence, by \eqref{rs}, 
$${\rm rad}(R\otimes S) = (R\otimes S)\cap  (M_n(K)\otimes {\rm rad}(S)).$$
Since  both $R\otimes S$ and $M_n(K)\otimes {\rm rad}(S)$ readily contain $R\otimes {\rm rad}(S)$, 
 it follows that $R\otimes {\rm rad}(S)\subseteq {\rm rad}(R\otimes S)$. 
\end{proof}

We can now prove the announced reduction theorem. 

\begin{theorem}\label{trad} If an algebra $S$ is stably finite and  $S/{\rm rad}(S)$ is  an  SN algebra, then 
$S$  is  an  SN algebra.
\end{theorem}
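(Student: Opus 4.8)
The plan is to start with a homomorphism $\varphi:R\to R\otimes S$ for a central simple algebra $R$ with basis $\{r_1,\dots,r_d\}$, and apply Lemma~\ref{l} to obtain elements $c_1,\dots,c_d\in R\otimes S$ satisfying conditions (a), (b), (c), together with the decompositions $c_k=\sum_l r_l\otimes s_{kl}$ and the auxiliary elements $b_{kl}\in R\otimes S$ with $b_{kl}c_k=1\otimes s_{kl}$. The last assertion of Lemma~\ref{l} tells us it suffices to produce a single pair $(k,l)$ for which $s_{kl}$ is invertible in $S$; since $S$ is stably finite by hypothesis, the conclusion $\varphi(x)=cxc^{-1}$ with $c=c_k$ then follows immediately.

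Next I would pass to the quotient $\bar S=S/{\rm rad}(S)$. Writing $\pi:S\to\bar S$ for the quotient map, the induced map ${\rm id}_R\otimes\pi:R\otimes S\to R\otimes\bar S$ is a surjective algebra homomorphism whose kernel is $R\otimes{\rm rad}(S)$, which by Lemma~\ref{lemrad} equals ${\rm rad}(R\otimes S)$. Composing $\varphi$ with ${\rm id}_R\otimes\pi$ gives a homomorphism $\bar\varphi:R\to R\otimes\bar S$. Since $\bar S$ is an SN algebra by hypothesis, there is an invertible $\bar c\in R\otimes\bar S$ with $\bar\varphi(x)=\bar c\,x\,\bar c^{-1}$ for all $x\in R$. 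Now I apply Lemma~\ref{l} to $\bar\varphi$ as well — but more efficiently, I would instead observe that the elements $\bar c_k:=({\rm id}_R\otimes\pi)(c_k)$ automatically satisfy conditions (a)--(c) for $\bar\varphi$ (these conditions are preserved under any algebra homomorphism), so by the uniqueness built into the argument of Lemma~\ref{l} — or rather, by running that same "multiply by $\sum_j w_jr_kz_j$" trick — one sees the $\bar c_k$ are the distinguished elements attached to $\bar\varphi$. The key point I want is: because $\bar\varphi$ is realized by conjugation by an invertible element, at least one $\bar s_{kl}$ (the image of $s_{kl}$ in $\bar S$) must be invertible in $\bar S$. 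Concretely, from (c) applied in $R\otimes\bar S$ we get $\sum_{k,l}\bar r_l\bar r_k\otimes\bar s_{kl}=1\otimes 1$, so $1$ lies in the linear span of the $\bar s_{kl}$; but that alone only shows invertibility in the local case, so for the semilocal/semisimple case I need a genuinely different input — namely that in $R\otimes\bar S$, which has $\bar c$ invertible, the element $c_k$ must map to something with a one-sided inverse times a unit, forcing one $\bar s_{kl}$ to be invertible. The cleanest route: apply the already-established fact that $\bar S$ being SN together with Lemma~\ref{l}'s stably-finite clause (noting $\bar S$ is semisimple hence stably finite) forces some $\bar s_{kl}\in\bar S^\times$.

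The final step lifts invertibility: if $\bar s_{kl}=\pi(s_{kl})$ is invertible in $\bar S=S/{\rm rad}(S)$, then $s_{kl}$ is invertible in $S$, because ${\rm rad}(S)$ is the Jacobson radical and an element whose image is a unit modulo the radical is itself a unit. With $s_{kl}$ invertible and $S$ stably finite, the last assertion of Lemma~\ref{l} directly yields an invertible $c\in R\otimes S$ with $\varphi(x)=cxc^{-1}$ for all $x\in R$, so $S$ is SN. The main obstacle I anticipate is the third step: correctly arguing that some $\bar s_{kl}$ is invertible in $\bar S$. One must be careful that the $c_k$ and $s_{kl}$ attached to $\varphi$ really do reduce, modulo ${\rm rad}(S)$, to the distinguished elements attached to $\bar\varphi$ — this requires the uniqueness of the $c_k$'s coming out of the proof of Lemma~\ref{l}, which is implicit there (they are determined by $\varphi$ via the formula $c_k=\sum_j w_j\varphi(z_j)r_k$-type expressions) and may need to be made explicit, or else one sidesteps it by invoking Lemma~\ref{l} and the stable finiteness of the semisimple algebra $\bar S$ to locate an invertible $\bar s_{kl}$ among whatever distinguished elements the lemma produces for $\bar\varphi$, and then noting that \emph{those} particular elements lift appropriately.
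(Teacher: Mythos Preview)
Your overall architecture is right --- reduce modulo ${\rm rad}(S)$, use the SN property of $\bar S$, and lift --- but the order of operations is wrong, and the step you flag as ``the main obstacle'' is a genuine gap that your proposed fixes do not close.

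The problem is this. Lemma~\ref{l} does \emph{not} assert that some $s_{kl}$ is invertible; it only says that \emph{if} one happens to be invertible (and $S$ is stably finite) \emph{then} $\varphi$ is inner. So ``invoking Lemma~\ref{l} and the stable finiteness of $\bar S$'' cannot manufacture an invertible $\bar s_{kl}$. Worse, knowing that $\bar\varphi$ is inner does not force any $\bar s_{kl}$ to be a unit. Indeed, the $c_k$ are uniquely determined by condition~(a) (set $x=w_j$, right-multiply by $z_j$, sum, and use \eqref{oh} to get $c_1=\sum_j\varphi(w_j)z_j$), and if $\bar\varphi(x)=\bar c\,x\,\bar c^{-1}$ with $\bar c=\sum_l r_l\otimes u_l$ and $\bar c^{-1}=\sum_k r_k\otimes t_k$, one checks that $\bar c_k=\bar c(1\otimes t_k)$, whence $\bar s_{kl}=u_l t_k$. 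Take $\bar S=F\times F$, $R=M_2(F)$, and $\bar c$ corresponding to $(I,\left[\begin{smallmatrix}0&1\\1&0\end{smallmatrix}\right])$ under $M_2(F\times F)\cong M_2(F)\times M_2(F)$: then every $u_l,t_k\in\{(1,0),(0,1)\}$, so every $u_lt_k\in\{(1,0),(0,1),(0,0)\}$ and none is a unit. Hence your chain ``reduce the $s_{kl}$, find an invertible $\bar s_{kl}$, lift'' breaks at the middle link. (As an aside, you also assume $\bar S$ is semisimple; the hypothesis is only that $\bar S$ is SN.)

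The paper's remedy is to use the SN property of $\bar S$ \emph{before} applying Lemma~\ref{l}. One first lifts the conjugating element: since $\bar S$ is SN there is an invertible $a\in R\otimes\bar S$ with $\bar\varphi(x)=a(x+J)a^{-1}$; by Lemma~\ref{lemrad} the kernel $J=R\otimes{\rm rad}(S)$ is ${\rm rad}(R\otimes S)$, so $a$ lifts to an invertible $b\in R\otimes S$. Replacing $\varphi$ by $x\mapsto b^{-1}\varphi(x)b$, one reduces to the case $\varphi(x)-x\in J$ for all $x$. \emph{Now} apply Lemma~\ref{l}: writing $1=\sum_k\lambda_k r_k$ and comparing $\varphi(x)=\sum_{k,l}r_lxr_k\otimes s_{kl}$ with $x=\sum_{k,l}\lambda_k\lambda_l\,r_lxr_k$, the $w_j,z_j$ trick extracts $s_{k1}-\lambda_k\lambda_1\cdot 1\in{\rm rad}(S)$; in particular $s_{11}=\lambda_1^2+u$ with $u\in{\rm rad}(S)$ and $\lambda_1\ne 0$, so $s_{11}$ is a unit. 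The point is that after normalization the reductions $\bar s_{kl}$ are \emph{scalars} $\lambda_k\lambda_l$, not merely entries of some invertible matrix, and this is what makes the lift go through.
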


\begin{proof}
Let $R$ be central simple and 
 write   $$J=R\otimes {\rm rad}(S).$$
  Take a homomorphism $\varphi:R\to R\otimes S$. We define 
	$$\Phi:R\to (R\otimes S)/J$$ by 
	$$\Phi(x)=\varphi(x)+J.$$
	Since $(R\otimes S)/J$ is canonically isomorphic to $R\otimes (S/{\rm rad}(S))$,  and $S/{\rm rad}(S)$ is  assumed to be an SN algebra, it follows that
	there exists an invertible element $a\in (R\otimes S)/J$ such that \begin{equation*}\label{bg}\Phi(x)= a(x+J)a^{-1}\,\,\mbox{ for all $x\in R$.}\end{equation*}
As $J$ is, by Lemma \ref{lemrad}, the Jacobson radical of $R\otimes S$,  it follows that there exists an invertible element $b\in R\otimes S$ such that $a = b +J$. Obviously, we have
	$$\varphi(x)-bxb^{-1}\in J\,\,\mbox{ for all $x\in R$,}$$
that is,
	$$b^{-1}\varphi(x)b -x\in J\,\,\mbox{ for all $x\in R$.}$$
Replacing the role of $\varphi$ by the homomorphism $x\mapsto b^{-1}\varphi(x)b$	we see that without loss of generality we may assume that $b=1$. Thus,
\begin{equation}\label{kk}\varphi(x) -x\in J\,\,\mbox{ for all $x\in R$.}\end{equation}
Now apply Lemma \ref{l}. Picking a basis $\{r_1,\dots,r_d\}$ of $R$, we can thus find  $s_{kl}\in S$, $k,l=1,\dots,p$, such that 
\begin{equation}\label{ll}\varphi(x)=\sum_{k=1}^d \sum_{l=1}^d r_l x r_k \otimes s_{kl} \,\,\mbox{ for all $x\in R$,}\end{equation}
and our goal is to show that at least one $s_{kl}$ is invertible in $S$.

Let $\lambda_k\in F$ be such that  $1=\sum_{k=1}^d {\lambda_k}r_k$. Then
$$x=1\cdot x\cdot 1 = \sum_{k=1}^d \sum_{l=1}^d (\lambda_k\lambda_l) r_l x r_k \,\,\mbox{ for all $x\in R$}.$$
Using \eqref{kk} and \eqref{ll} we thus obtain
\begin{equation}\label{ro}
\sum_{k=1}^d \sum_{l=1}^d r_l x r_k \otimes (s_{kl} - \lambda_k\lambda_l\cdot 1)\in J \,\,\mbox{ for all $x\in R$}.
\end{equation}
We may assume that $\lambda_1\ne 0$. Choose  $w_j,z_j\in R$ that satisfy \eqref{oh}. 
Denoting the expression in \eqref{ro} by $\rho(x)$, we have
\begin{align*}
\sum_j w_j\rho(z_j) &= \sum_j \sum_{k=1}^d \sum_{l=1}^d w_jr_l z_j r_k \otimes (s_{kl} - \lambda_k\lambda_l \cdot1)\\
&= \sum_{k=1}^d \sum_{l=1}^d \Big(\sum_j w_jr_l z_j \Big) r_k \otimes (s_{kl} - \lambda_k\lambda_l\cdot 1)\\
&= \sum_{k=1}^d   r_k \otimes (s_{k1} - \lambda_k\lambda_1 \cdot1).
\end{align*}
As $\rho$ maps into $J$ it follows that 
$$\sum_{k=1}^d   r_k \otimes (s_{k1} - \lambda_k\lambda_1 \cdot 1)\in J =R\otimes {\rm rad}(S).$$
Since the $r_k$'s are linearly independent, we must have  $s_{k1} - \lambda_k\lambda_1 \cdot 1 \in {\rm rad}(S)$
for each $k$. In particular, $s_{11} = \lambda_1^2\cdot 1 + u$ for some $u\in  {\rm rad}(S)$. Since $\lambda_1\ne 0$ it follows that $s_{11}$ is invertible, as desired. 
\end{proof}

 We are now in a position to give our main result. 
\begin{theorem}\label{thm:semiLoc}
Every semilocal algebra $S$ is an SN algebra.
\end{theorem}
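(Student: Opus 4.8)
The plan is to reduce the claim that a semilocal algebra $S$ is SN to the already-established Theorem \ref{trad} together with Lemma \ref{lsemi}. By definition, $S$ is semilocal precisely when $S/{\rm rad}(S)$ is semisimple, and Lemma \ref{lsemi} tells us that every semisimple algebra is an SN algebra. So the hypotheses of Theorem \ref{trad} are met as soon as we know that $S$ is stably finite. Thus the entire content of the proof is: \emph{every semilocal algebra is stably finite}, after which Theorem \ref{trad} finishes the argument in one line.

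To prove stable finiteness, first I would observe that for any $n$, the matrix algebra $M_n(S)$ is again semilocal: indeed ${\rm rad}(M_n(S)) = M_n({\rm rad}(S))$ (the classical fact used already in the proof of Lemma \ref{lemrad}, see \cite[pp. 57--58]{Lam}), and $M_n(S)/M_n({\rm rad}(S)) \cong M_n(S/{\rm rad}(S))$ is semisimple because $S/{\rm rad}(S)$ is. Hence it suffices to show that a semilocal algebra has no one-sided units that are not two-sided; equivalently, that if $xy = 1$ in $S$ then $yx = 1$. This is a standard fact: semilocal rings are Dedekind-finite. One clean way to see it is that in $\bar S := S/{\rm rad}(S)$, which is semisimple and hence (left) artinian, $\bar x\,\bar y = 1$ forces $\bar y\,\bar x = 1$ since artinian rings are Dedekind-finite (a left-invertible element in a left artinian ring is invertible, because right multiplication by it is a surjective, hence injective, endomorphism of the module ${}_S S$). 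Then $1 - yx \in {\rm rad}(S)$, and $1-(1-yx) = yx$ is therefore... one needs a tiny bit more care here, so instead I would argue directly: from $xy=1$, the element $e = yx$ is idempotent, and $1-e \in {\rm rad}(S)$ is an idempotent lying in the Jacobson radical, hence $1-e = 0$, i.e.\ $yx = 1$. Applying this to $M_n(S)$ in place of $S$ for every $n$ gives stable finiteness.

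Therefore the proof reads: Let $S$ be semilocal. For each $n$, $M_n(S)$ is semilocal, so any idempotent in ${\rm rad}(M_n(S))$ is zero; applied to $1 - yx$ where $xy = 1$ in $M_n(S)$, this shows $M_n(S)$ is Dedekind-finite, so $S$ is stably finite. Since $S/{\rm rad}(S)$ is semisimple, it is an SN algebra by Lemma \ref{lsemi}. Theorem \ref{trad} now yields that $S$ is an SN algebra.

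The only mild subtlety — and the step I would be most careful about writing out — is the passage from "$1-yx$ is an idempotent in the radical" to "$1 - yx = 0$": one checks $(1-yx)$ is idempotent using $xy=1$, namely $(1-yx)^2 = 1 - 2yx + yxyx = 1 - 2yx + yx = 1 - yx$, and then uses that the Jacobson radical contains no nonzero idempotent (if $e \in {\rm rad}(S)$ is idempotent then $1-e$ is invertible, but $(1-e)e = 0$ forces $e = 0$). Everything else is bookkeeping. I do not anticipate a real obstacle; the result is essentially immediate once the reduction machinery of Theorem \ref{trad} and Lemma \ref{lsemi} is in place, the one nontrivial ingredient being the well-known Dedekind-finiteness of semilocal rings.
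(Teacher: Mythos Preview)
Your proposal is correct and follows exactly the paper's route: apply Theorem \ref{trad} together with Lemma \ref{lsemi}, the only remaining ingredient being that semilocal algebras are stably finite. The paper handles that ingredient by citing \cite{Lam}, whereas you supply the standard direct argument (idempotent $1-yx$ lies in the radical, hence is zero); otherwise the proofs are identical.
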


\begin{proof}Since $S$  is stably finite \cite[Theorem 2.13]{Lam} and the algebra $S/{\rm rad}(S)$ is semisimple,    
the theorem follows from Lemma \ref{lsemi} and  Theorem \ref{trad}.
\end{proof}

\begin{corollary} \label{art}
Every artinian algebra is an SN algebra. 
\end{corollary}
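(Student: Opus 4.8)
The final statement to prove is Corollary \ref{art}: every artinian algebra is an SN algebra.

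The plan is to deduce this immediately from Theorem \ref{thm:semiLoc}, which states that every semilocal algebra is an SN algebra. So the only thing that needs to be verified is that an artinian algebra is semilocal. Recall that $S$ is semilocal means $S/\mathrm{rad}(S)$ is semisimple, i.e.\ a finite direct product of simple artinian algebras.

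\begin{proof}
Let $S$ be an artinian algebra. Then $S/\mathrm{rad}(S)$ is a semisimple algebra by the Wedderburn--Artin theory (see, e.g., \cite[Theorem 3.10]{Lam}), so $S$ is semilocal. By Theorem \ref{thm:semiLoc}, $S$ is an SN algebra.
\end{proof}

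First I would recall the standard fact that for an (left, say) artinian ring $S$ the quotient $S/\mathrm{rad}(S)$ is semisimple artinian; this is a cornerstone of the Wedderburn--Artin structure theory and can be cited from \cite{Lam}. Hence $S$ is semilocal by definition. Then Theorem \ref{thm:semiLoc} applies verbatim and yields that $S$ is SN. There is essentially no obstacle here: the corollary is a one-line consequence of the theorem together with a textbook fact, and the real work has already been done in establishing Lemma \ref{lemrad}, Theorem \ref{trad}, and Lemma \ref{lsemi}. The only mild point to be careful about is whether ``artinian'' is meant as left artinian, right artinian, or two-sided; in any of these cases the quotient by the Jacobson radical is semisimple, so the argument is unaffected, and one could also simply invoke stable finiteness of artinian rings directly via \cite[Theorem 2.13]{Lam} to stay consistent with the proof of Theorem \ref{thm:semiLoc}.
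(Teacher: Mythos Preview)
Your proof is correct and matches the paper's approach: the corollary is stated without proof in the paper, being an immediate consequence of Theorem~\ref{thm:semiLoc} together with the standard fact that artinian algebras are semilocal.
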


\section{Finite-dimensional algebras}\label{s:findim}

Corollary \ref{art}  shows that every finite-dimensional algebra  is an   SN algebra.
The next result gives a strengthening of this property.

\begin{theorem}\label{tfd}
Let $A$ be a finite-dimensional algebra and let $R$ be its central simple subalgebra. Then every homomorphism from $R$ into $A$ can be extended to an inner automorphism of $A$.
\end{theorem}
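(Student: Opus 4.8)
The plan is to leverage Theorem~\ref{thm:semiLoc} (which gives the SN property for the finite-dimensional, hence semilocal, algebra $A$) by tensoring $R$ with a suitable complement and then transporting an inner automorphism of $R\otimes R'$ to one of $A$. First I would invoke the double centralizer theorem: since $R$ is a central simple subalgebra of the finite-dimensional algebra $A$, one has $A\cong R\otimes C$, where $C=C_A(R)$ is the centralizer of $R$ in $A$, via the multiplication map $r\otimes c\mapsto rc$ (this is the standard fact, e.g.\ \cite[Corollary 4.49]{INCA} or the Skolem--Noether/centralizer package in the central simple setting; note $C$ need not be semisimple, but it is finite-dimensional, hence semilocal). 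Under this identification $R$ sits inside $A$ as $R\otimes 1$, exactly as in Definition~\ref{def} with $S=C$.

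Next, given a homomorphism $\varphi\colon R\to A$, view it under the isomorphism as a homomorphism $\varphi\colon R\to R\otimes C$. Since $C$ is finite-dimensional, it is semilocal, so by Theorem~\ref{thm:semiLoc} it is an SN algebra; hence there is an invertible $c\in R\otimes C\cong A$ with $\varphi(x)=cxc^{-1}$ for all $x\in R$. Transporting back along the isomorphism $A\cong R\otimes C$, this says precisely that $\varphi$ extends to the inner automorphism $a\mapsto cac^{-1}$ of $A$, which is the claim. So at the level of the main argument the proof is essentially a one-line deduction from Theorem~\ref{thm:semiLoc} once the tensor decomposition is in hand.

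The main obstacle — and the only real content beyond citing earlier results — is justifying the decomposition $A\cong R\otimes C_A(R)$ when $A$ is merely finite-dimensional (not semisimple or simple). The clean way is: $R$ central simple means $R$ is separable over $F$, so $R\otimes R^{\mathrm{op}}\cong M_m(F)$ and any $R$-bimodule (in particular $A$, viewed via left multiplication by $R$ and right multiplication by $R$) decomposes; concretely, the multiplication map $\mu\colon R\otimes C_A(R)\to A$ is an injective algebra homomorphism because $R$ is central simple (its kernel, being an ideal of $R\otimes C_A(R)$, meets $R\otimes 1$ trivially by centrality-plus-simplicity of $R$), and it is surjective by a dimension/bimodule count: writing $A=\bigoplus A_i$ as a direct sum of simple $R\otimes R^{\mathrm{op}}$-modules, each $A_i$ is a sum of copies of the unique simple $R\otimes R^{\mathrm{op}}$-module $R$ itself, and assembling the multiplicities identifies $A$ with $R\otimes C_A(R)$. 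I would handle this either by a short direct argument along these lines or by citing the appropriate statement from \cite{INCA} (the argument for \cite[Corollary 4.49]{INCA} is written for $A$ simple, so some care or a slightly more general reference is needed).

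As a minor remark I would add that this genuinely strengthens Corollary~\ref{art}: there $A$ itself had to play the role of $R\otimes S$, whereas here an arbitrary finite-dimensional overalgebra of a central simple $R$ suffices, with the centralizer automatically furnishing the tensor factor.
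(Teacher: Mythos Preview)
Your argument is correct and genuinely different from the paper's. You prove the Azumaya-type splitting $A\cong R\otimes C_A(R)$ directly (using that $R\otimes R^{\mathrm{op}}\cong M_d(F)$, so $A$ decomposes as an $R$-bimodule into copies of $R$, whence a dimension count gives surjectivity of the multiplication map), and then the result is immediate from Theorem~\ref{thm:semiLoc}. The paper instead only establishes this decomposition in the split case $R=M_n(F)$ (via matrix units, \cite[Lemma~2.52]{INCA}) and then, for general $R$, passes to a splitting field $K$, obtains the conjugating element $b\in K\otimes A$, extracts its $F$-coefficients $a_i$ (each satisfying $\varphi(x)a_i=a_ix$), and uses a determinant/Zariski-density argument to find an invertible $F$-linear combination of the $a_i$. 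Your route is shorter and more conceptual, at the cost of invoking (or proving, as you sketch) the general centralizer decomposition; the paper's route is more self-contained, needing only the elementary matrix-unit lemma, but pays with the splitting-field descent. One small clean-up: your injectivity argument should say that every ideal of $R\otimes C$ has the form $R\otimes I$ (since $R$ is central simple), and then $I=\mu(1\otimes I)=0$; the phrase ``meets $R\otimes 1$ trivially'' alone does not quite do it.
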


\begin{proof}
Assume first that $R=M_n(F)$. Then $A$ contains a set of $n\times n$ matrix units and is therefore isomorphic to $M_n(S)\cong R\otimes S$ for some subalgebra $S$ of $A$ \cite[Lemma 2.52]{INCA}. Since $S$ is also finite-dimensional, the desired conclusion follows from Corollary \ref{art}.

Now let $R$ be an arbitrary central simple algebra. We may assume that the field $F$ is infinite, for otherwise $R\cong M_n(F)$ by   Wedderburn's theorem on finite division rings.
Let 
$\varphi$ be a homomorphism from $R$ into $A$.
Take  a splitting field $K$ for $R$. Identifying $K\otimes R$  with  $M_n(K)$, $n \ge 1$,  it follows from the preceding paragraph that there exists 
$b\in K\otimes A$ such that \begin{equation*} \label{y} ({\rm id}_K\otimes \varphi)(y) = byb^{-1}\end{equation*} for all $y\in K\otimes R$.
In particular, $$(1\otimes \varphi(x))b = b(1\otimes x)$$ for all $x\in R$.
 Writing $b=\sum_{i=1}^m k_i\otimes a_i$ with the $k_i$'s  linearly independent, it follows that 
$$\sum_{i=1}^m k_i \otimes (\varphi(x)a_i - a_ix)=0,$$
and so 
$\varphi(x)a_i = a_ix$
for all $x\in R$ and every $i$. Hence we see that it suffices to show that span$_F\{a_1,\dots,a_m\}$ contains an  element which is invertible in $A$. 

As a finite-dimensional algebra, $A$ can be considered as a subalgebra of $M_N(F)$ for some $N\ge 1$. Take the polynomial
$$f(\xi_1,\dots,\xi_m)=\det\Big(\sum_{i=1}^m \xi_i a_i\Big) \in F[\xi_1,\dots,\xi_m].$$
Note that $K\otimes A$ can be viewed as a subalgebra of $M_N(K)$. 
Since $b$ is invertible in $K\otimes A$, we know that span$_K\{a_1,\dots,a_m\}$ contains an invertible element in $K\otimes A$. This clearly implies that $f$ is a nonzero polynomial. 
As $F$ is infinite,  there exist $\lambda_i\in F$ such that $f(\lambda_1,\dots,\lambda_m)\ne 0$. That is, span$_F\{a_1,\dots,a_m\}$ contains an element $c$ which is invertible in $M_N(F)$. However,
since we are in finite dimensions, $c^{-1}$ is a polynomial in $c$. Thus, $c$ is invertible in  $A$. 
\end{proof}

Using the standard homomorphism construction we will now see that Theorem \ref{tfd} can be used for  showing that all derivations from $R$ 
into any $R$-bimodule $M$ are inner (in accordance with the conventions mentioned at the very beginning of the paper, we assume that our bimodules are unital). 
This is, of course,  a  well-known result. Another way of stating it is that central simple algebras are separable.

\begin{corollary}\label{cfd}
Every derivation from a central simple algebra $R$ into an arbitrary $R$-bimodule $M$ is inner.
\end{corollary}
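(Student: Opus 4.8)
The plan is to use the standard trick that reduces the question of inner-ness of a derivation to the splitting of a certain extension, realized concretely inside a finite-dimensional algebra. Given a derivation $D:R\to M$, I would first reduce to the case where $M$ is finite-dimensional: since $R$ is finite-dimensional with basis $\{r_1,\dots,r_d\}$, the sub-bimodule $M_0$ of $M$ generated by $D(r_1),\dots,D(r_d)$ is finitely generated, hence finite-dimensional (a finitely generated bimodule over a finite-dimensional algebra is finite-dimensional), and $D$ maps $R$ into $M_0$; an element implementing $D$ as an inner derivation into $M_0$ also implements it into $M$. So from now on $M$ is finite-dimensional.

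Next I would form the \emph{split null extension} (trivial extension) $A = R \ltimes M$, whose underlying vector space is $R\oplus M$ with multiplication $(r,m)(r',m') = (rr',\, rm' + mr')$. This is a finite-dimensional algebra, and the map $\iota:R\to A$, $r\mapsto (r,0)$, is an embedding identifying $R$ with a subalgebra; moreover $R$ so embedded is central simple in $A$ (its centralizer-type properties are inherited since $M$ is a square-zero ideal). Using the derivation $D$, define $\varphi:R\to A$ by $\varphi(r) = (r, D(r))$. The Leibniz rule for $D$ is exactly the statement that $\varphi$ is an algebra homomorphism: $\varphi(rr') = (rr', D(rr')) = (rr', rD(r') + D(r)r') = (r,D(r))(r',D(r')) = \varphi(r)\varphi(r')$, and $\varphi(1) = (1,0)$ since $D(1)=0$.

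Now apply Theorem \ref{tfd} to the finite-dimensional algebra $A$ and its central simple subalgebra $R$: the homomorphism $\varphi:R\to A$ extends to an inner automorphism of $A$, so there is an invertible $c = (u, n)\in A$ with $\varphi(r) = c\,(r,0)\,c^{-1}$ for all $r\in R$. Invertibility of $(u,n)$ forces $u$ to be invertible in $R$ (modulo the square-zero ideal $M$, so $u$ is invertible since units lift), and after conjugating by $(u,0)$ — which corresponds to replacing $D$ by the cohomologous derivation $r\mapsto u^{-1}D(r)u - u^{-1}[u,r]$, still inner iff $D$ is — I may assume $c = (1,n)$ with $c^{-1} = (1,-n)$. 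Then $\varphi(r) = (1,n)(r,0)(1,-n) = (r,\, nr - rn)$, so comparing the $M$-components gives $D(r) = nr - rn = [n,r]$ for all $r\in R$; that is, $D$ is the inner derivation implemented by $-n$ (or $n$, up to the sign convention for inner derivations).

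The only genuinely delicate points are bookkeeping rather than substance: verifying that $R\hookrightarrow A$ really is central simple in $A$ so that Theorem \ref{tfd} applies (it is, because $M^2 = 0$ and $R\cap M = 0$), and checking that invertibility in the trivial extension $A$ is equivalent to invertibility of the $R$-component, which is immediate from $M$ being a nilpotent ideal. No step presents a real obstacle; the content is entirely in Theorem \ref{tfd}, and this corollary is simply the translation of that theorem through the standard equivalence between extending homomorphisms on $R$ and splitting square-zero extensions, i.e. vanishing of $H^1(R,M)$.
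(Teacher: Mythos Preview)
Your argument is correct and essentially identical to the paper's: reduce to finite-dimensional $M$, form the trivial extension $A=R\ltimes M$ (the paper uses the isomorphic algebra of $2\times2$ upper-triangular matrices $\left[\begin{smallmatrix}x&u\\0&x\end{smallmatrix}\right]$), apply Theorem~\ref{tfd}, and read off that $D$ is inner. The paper's endgame is marginally cleaner than your ``conjugate by $(u,0)$'' step: from $\varphi(r)c=c(r,0)$ one immediately gets $ru=ur$ (hence $u\in F^{\times}$ since $R$ is central) and $rn+D(r)u=nr$, so $D(r)=(u^{-1}n)r-r(u^{-1}n)$ without any auxiliary reduction.
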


\begin{proof} Let $d:R\to M$ be  a derivation.
As  a finite-dimensional subspace of $M$, $d(R)$ generates a finite-dimensional subbimodule of $M$. Therefore, there is no loss of generality in assuming that $M$ itself is finite-dimensional. 

Let $\widetilde{A}$ be the set of all matrices of the form $\left[
    \begin{smallmatrix}
      x & u\\
      0&x
    \end{smallmatrix}\right]$, where $x\in R$ and $u\in M$. Note that  $\widetilde{A}$ is a (finite-dimensional!) algebra under the standard matrix operations.
Let $\widetilde{R}$ be its subalgebra consisting of all matrices of the form $\left[
    \begin{smallmatrix}
      x & 0\\
      0&x
    \end{smallmatrix}\right]$,  $x\in R$. Of course,  $\widetilde{R}\cong R$.

Define $\varphi:\widetilde{R}\to \widetilde{A}$ by 
$$\varphi\left(\left[
    \begin{matrix}
      x & 0\\
      0&x
    \end{matrix}\right]\right)=\left[
    \begin{matrix}
      x & d(x)\\
      0&x
    \end{matrix}\right].$$
		One immediately checks  that $\varphi$ is a homomorphism. By Theorem \ref{tfd} there exists an invertible element $c= \left[
    \begin{smallmatrix}
      t & v\\
      0&t
    \end{smallmatrix}\right] \in \widetilde{A}$ such that $\varphi(\tilde{x})= c\tilde{x}c^{-1}$ for all $\tilde{x}\in\widetilde{R}$. Consequently,  $\varphi(\tilde{x})c= c\tilde{x}$, that is,
		$$\left[
    \begin{matrix}
      x & d(x)\\
      0&x
    \end{matrix}\right]\cdot \left[
    \begin{matrix}
      t & v\\
      0&t
    \end{matrix}\right] = \left[
    \begin{matrix}
      t & v\\
      0&t
    \end{matrix}\right]\cdot \left[
    \begin{matrix}
      x & 0\\
      0&x
    \end{matrix}\right]$$
		for all $x\in R$. This yields $$xt = tx\,\,\,\mbox{ and}\,\,\, xv + d(x)t = vx$$   for all $x\in R$. Since $R$ is central, the first identity shows that $t\in F$. Moreover, $t\ne 0$ for $c$ is invertible.  Hence $w=t^{-1}v$ satisfies  $d(x) = wx - xw$ by the second identity.
\end{proof}

\section{Domains}\label{s:dom}

In this section we give classes of domains which are SN algebras. For instance, 
UFDs and free algebras are SN algebras (Corollaries \ref{ufd} and \ref{cor:free}).

As the coordinate ring of an elliptic curve demonstrates (see Example \ref{ex:elliptic}),
not every commutative domain is an SN algebra.
However, the following proposition shows that 
every domain $S$ embedded into a division algebra satisfies
a certain weaker condition.

\begin{proposition}\label{p}
Let $R$ be a  central simple algebra, and let an algebra $S$ be a domain which can be embedded into a division algebra $D$. If $\varphi$ is a homomorphism from $R$ into 
 $R\otimes S$, then there exists  $c\in R\otimes S$ which is  invertible in $R\otimes D$, in fact $$c^{-1} = (1\otimes s^{-1})b\in R\otimes D$$ for some nonzero $s\in S$ and $b\in R\otimes S$,  such that
$$\varphi(x)=cxc^{-1}$$ for all $x\in R$. Moreover, if $\{r_1,\dots,r_d\}$ is a basis of $R$, $b=\sum_{k=1}^d 
r_k\otimes s_k$ for some $s_k\in S$, and
$c=\sum_{l=1}^d r_l\otimes t_l$  for some $t_l\in S$, then   $$t_ls^{-1}s_k\in S$$ for all $k$ and $l$.  
\end{proposition}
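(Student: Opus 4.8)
The plan is to feed $\varphi$ into Lemma~\ref{l} and then push the invertibility statement from $R\otimes S$ up to $R\otimes D$. By Lemma~\ref{l} there exist $c_1,\dots,c_d\in R\otimes S$ satisfying (a)--(c), and, writing $c_k=\sum_{l=1}^d r_l\otimes s_{kl}$ with $s_{kl}\in S$, there also exist $b_{kl}\in R\otimes S$ with $b_{kl}c_k=1\otimes s_{kl}$. From (c) we have $\sum_{k,l}r_lr_k\otimes s_{kl}=1\otimes 1\neq 0$, so (cf.\ the proof of Corollary~\ref{t1}) at least one $s_{kl}$ is nonzero; fix such a pair $(k_0,l_0)$ and put $s:=s_{k_0l_0}$, $b:=b_{k_0l_0}$, $c:=c_{k_0}$. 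Since $S$ is a domain inside the division algebra $D$, the element $s$ is invertible in $D$, and $bc=1\otimes s$ shows that $(1\otimes s^{-1})b\in R\otimes D$ is a left inverse of $c$ in $R\otimes D$. As $D$ is a local algebra it is stably finite \cite[Theorem~20.13]{Lam}, so by \cite[Theorem~1]{Mon}---used exactly as in the last paragraph of the proof of Lemma~\ref{l}---this left inverse is two-sided. Hence $c$ is invertible in $R\otimes D$ with $c^{-1}=(1\otimes s^{-1})b$, and then (b) yields $\varphi(x)=cxc^{-1}$ for all $x\in R$.

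For the ``moreover'' part, write $b=\sum_{k=1}^d r_k\otimes s_k$ and $c=\sum_{l=1}^d r_l\otimes t_l$ with $s_k,t_l\in S$, so that $c^{-1}=(1\otimes s^{-1})b=\sum_{k=1}^d r_k\otimes s^{-1}s_k$. Plugging $x=r_i$ into $\varphi(x)=cxc^{-1}$ and expanding the product gives
\[
\varphi(r_i)=\sum_{k,l=1}^d r_lr_ir_k\otimes t_ls^{-1}s_k=\sum_{k,l=1}^d (L_{r_l}R_{r_k})(r_i)\otimes t_ls^{-1}s_k\qquad(i=1,\dots,d).
\]
I would now read this as an identity in $\operatorname{End}_F(R)\otimes_F D\cong\operatorname{Hom}_F(R,R\otimes_F D)$. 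Since $R$ is central simple, $\operatorname{End}_F(R)$ is spanned by the operators $L_wR_z$ \cite[Lemma~1.25]{INCA}, hence by the $d^2$ operators $L_{r_l}R_{r_k}$; as $\dim_F\operatorname{End}_F(R)=d^2$, these form an $F$-basis of $\operatorname{End}_F(R)$. The displayed equalities then say that $\varphi$ and the element $\sum_{k,l}(L_{r_l}R_{r_k})\otimes t_ls^{-1}s_k$ of $\operatorname{End}_F(R)\otimes_F D$ take the same value on every basis vector $r_i$ of $R$, so they coincide in $\operatorname{End}_F(R)\otimes_F D$. On the other hand $\varphi$ maps into $R\otimes_F S$, so $\varphi=\sum_{k,l}(L_{r_l}R_{r_k})\otimes\sigma_{lk}$ for some $\sigma_{lk}\in S$. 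Comparing the two expansions with respect to the $F$-basis $\{L_{r_l}R_{r_k}\}$ and applying the elementary uniqueness fact recalled after \eqref{nov} (with $\operatorname{End}_F(R)$ and $D$ in place of $R$ and $S$) gives $t_ls^{-1}s_k=\sigma_{lk}\in S$ for all $k,l$, as claimed.

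I expect the first paragraph to be essentially routine bookkeeping built on Lemma~\ref{l}, plus the observation that a division algebra is stably finite. The genuine obstacle is the ``moreover'' part: one cannot obtain $t_ls^{-1}s_k\in S$ by comparing coefficients of the $r_i$'s, since the triple products $r_lr_ir_k$ are highly linearly dependent. The way around this is to change the ambient $F$-vector space from $R$ to $\operatorname{End}_F(R)$, where the central simplicity of $R$ guarantees that $\{L_{r_l}R_{r_k}\}$ is a genuine basis; moreover the comparison has to take place over $D$, the natural home of $c^{-1}$, the point being precisely that these a priori $D$-valued coordinates land back in $S$.
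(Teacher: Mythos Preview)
Your proof is correct. The first paragraph matches the paper's argument essentially verbatim: apply Lemma~\ref{l}, pick a nonzero $s_{kl}$, and invoke \cite[Theorem~1]{Mon} over the division algebra $D$ to upgrade the left inverse to a two-sided one.

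For the ``moreover'' part your argument is correct but packaged differently from the paper. The paper uses the elements $w_j,z_j$ from \eqref{oh} directly: it computes
\[
\sum_j w_j\varphi(z_j)=\sum_{k,l}\Bigl(\sum_j w_jr_lz_j\Bigr)r_k\otimes t_ls^{-1}s_k=\sum_k r_k\otimes t_1s^{-1}s_k,
\]
and since the left side lies in $R\otimes S$ one reads off $t_1s^{-1}s_k\in S$ (and similarly for every $l$). You instead observe that $\{L_{r_l}R_{r_k}\}_{l,k}$ is an $F$-basis of $\operatorname{End}_F(R)$---equivalently, that the multiplication map $R\otimes R^{\rm op}\to\operatorname{End}_F(R)$ is an isomorphism for central simple $R$---and compare the two expansions of $\varphi$ in $\operatorname{End}_F(R)\otimes_F D\cong\operatorname{Hom}_F(R,R\otimes_F D)$. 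These are the same idea at different levels of abstraction: the paper's computation with $w_j,z_j$ is precisely the coordinate projection dual to your basis $\{L_{r_l}R_{r_k}\}$. Your route is cleaner conceptually and gets all pairs $(k,l)$ at once, while the paper's is more hands-on and stays within the elementary framework already set up in \eqref{oh}, without appealing to the isomorphism $R\otimes R^{\rm op}\cong\operatorname{End}_F(R)$.
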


\begin{proof}
Not every $c_k$ from Lemma \ref{l} can be $0$ (in view of (c)), and so $s_{kl}\ne 0$ for some $k$ and $l$.
Set $c=c_k$ and $s=s_{kl}$. By the lemma we have $\varphi(x)c=cx$ for all $x\in R$ and $bc=1\otimes s$ for some 
$b\in R\otimes S$. Of course,  $s$ is invertible in $D$. Therefore $(1\otimes s^{-1})b$ is a left inverse of $c$ in $R\otimes D$. By \cite[Theorem 1]{Mon}, a left inverse in $R\otimes D$ is also a right inverse, so $c^{-1}=(1\otimes s^{-1})b$.

Now take a basis  $\{r_1,\dots,r_d\}$ of $R$, and let us write
$b=\sum_{k=1}^d r_k\otimes s_k$  and 
$c=\sum_{l=1}^d r_l\otimes t_l$. Then 
$$\varphi(x)= cxc^{-1}=\sum_{k=1}^d\sum_{l=1}^d r_lxr_k\otimes t_ls^{-1}s_k.$$
for all $x\in R$. Pick $w_j,z_j\in R$ satisfying \eqref{oh}.
 We have
\begin{align*}
\sum_j w_j\varphi(z_j) &= \sum_j \sum_{k=1}^d \sum_{l=1}^d w_jr_l z_j r_k \otimes t_ls^{-1}s_k\\
&=  \sum_{k=1}^d \sum_{l=1}^d \Big( \sum_j w_jr_l z_j \Big)r_k \otimes t_ls^{-1}s_k\\
&=  \sum_{k=1}^d r_k \otimes t_1s^{-1}s_k.
\end{align*}
Since the left hand side, i.e.  $\sum_j w_j\varphi(z_j)$, lies in $R\otimes S$, so does the right hand side. This readily yields that
$t_1s^{-1}s_k\in S$.
\end{proof}

\begin{corollary}\label{ufd}
Every UFD is an SN algebra.
\end{corollary}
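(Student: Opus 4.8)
The plan is to deduce Corollary \ref{ufd} directly from Proposition \ref{p}, using that a UFD embeds into its field of fractions and that in a UFD one can clear common factors. Let $S$ be a UFD with fraction field $D$ (a commutative field, hence a division algebra), let $R$ be a central simple algebra with basis $\{r_1,\dots,r_d\}$, and let $\varphi\colon R\to R\otimes S$ be a homomorphism. By Proposition \ref{p} there are $b=\sum_k r_k\otimes s_k$ and $c=\sum_l r_l\otimes t_l$ in $R\otimes S$ together with a nonzero $s\in S$ such that $c^{-1}=(1\otimes s^{-1})b$ in $R\otimes D$, $\varphi(x)=cxc^{-1}$ for all $x\in R$, and $t_l s^{-1}s_k\in S$ for all $k,l$.

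The point is to modify $c$ by a central scalar from $D$ so that it becomes invertible already in $R\otimes S$. First I would argue that not all $s_k$ vanish: since $bc=1\otimes s\neq 0$ we cannot have $b=0$. Let $e=\gcd(s_1,\dots,s_d)\in S$ (well-defined up to a unit since $S$ is a UFD) and write $s_k=e\hat s_k$ with $\gcd(\hat s_1,\dots,\hat s_d)=1$; set $\hat b=\sum_k r_k\otimes\hat s_k\in R\otimes S$, so $b=(1\otimes e)\hat b$. Similarly one may need to look at the $t_l$. The strategy is to show that $\hat c:=(1\otimes s e^{-1})c$, a priori only an element of $R\otimes D$, actually lies in $R\otimes S$ and is invertible there with inverse $(1\otimes(se)^{-1}\cdot ?)$; more cleanly, one checks that $\hat c$ has entries in $S$ and that $\hat b\,\hat c=1\otimes(\text{unit})$, which by Montgomery's theorem (\cite[Theorem 1]{Mon}) — $S$ being a commutative domain is stably finite — makes $\hat c$ two-sided invertible in $R\otimes S$. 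Since $\hat c$ differs from $c$ only by the central invertible factor $1\otimes se^{-1}\in D$, conjugation by $\hat c$ equals conjugation by $c$, so $\varphi(x)=\hat c x\hat c^{-1}$ with $\hat c\in (R\otimes S)^\times$, proving $S$ is SN.

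For the coprimality/divisibility bookkeeping I would use the relation $t_l s^{-1}s_k\in S$ from Proposition \ref{p}: fixing any $l_0$ with $t_{l_0}\neq 0$ (such exists since $c\neq0$ as $\varphi(1)=1$), the divisibilities $s\mid t_{l_0}s_k$ in $S$ for all $k$, combined with a suitable coprimality among the $s_k$ (after dividing out their gcd) and among the $t_l$, force $s$ to divide $\gcd_k s_k\cdot(\text{something})$, which is exactly what is needed to conclude that $(1\otimes se^{-1})c$ and the rescaled $\hat b$ pair up to a unit times $1$. The cleanest route is probably: after replacing $b,c,s$ by $\hat b=(1\otimes e)^{-1}b$, $\hat c$ to be determined, one arranges $\gcd$ of all the coefficients of $b$ to be $1$ and shows the product of the adjusted elements is $1\otimes u$ with $u$ a unit, then invokes stable finiteness.

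The main obstacle will be the last paragraph's gcd argument: one must be careful that ``greatest common divisor'' of a tuple of elements of $R\otimes S$ is not literally defined, so the factoring-out has to be done coefficientwise relative to the chosen basis of $R$, and one must verify the resulting element is genuinely invertible in $R\otimes S$ (not just has a one-sided inverse) — this is where Montgomery's theorem together with stable finiteness of the commutative domain $S$ is essential. Everything else is a direct unwinding of Proposition \ref{p}.
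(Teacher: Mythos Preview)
Your strategy is exactly the paper's: invoke Proposition~\ref{p}, normalize by dividing out a greatest common divisor, and then use the divisibility relations $t_l s^{-1}s_k\in S$ to conclude that (a rescaling of) $c$ is already invertible in $R\otimes S$. The paper normalizes the $t_l$ rather than the $s_k$: replacing $c$ by $(1\otimes e^{-1})c$ with $e=\gcd(t_1,\dots,t_d)$, one may assume the $t_l$ are coprime, and then a direct prime-power count shows $s\mid s_k$ for every $k$, whence $c^{-1}=(1\otimes s^{-1})b\in R\otimes S$. Your dual choice (clearing $\gcd(s_k)$ instead) works equally well in principle, but two concrete problems in your write-up need fixing.

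First, your rescaling factor is inverted. With $e=\gcd(s_k)$ and $\hat b=(1\otimes e^{-1})b$, the element you want is $\hat c=(1\otimes es^{-1})c$, not $(1\otimes se^{-1})c$: then $\hat b\,\hat c=(1\otimes s^{-1})bc=1$, whereas with your factor one gets $\hat b\,\hat c=1\otimes s^2e^{-2}$, which is not a unit in general. Second, the heart of the argument---showing $\hat c\in R\otimes S$, i.e.\ $s\mid et_l$ for every $l$---is only gestured at. You must actually run the prime-power argument: if $p^n\mid s$ but $p^n\nmid et_{l_0}$, then from $s\mid t_{l_0}e\hat s_k$ for all $k$ one deduces $p\mid\hat s_k$ for every $k$, contradicting $\gcd(\hat s_k)=1$. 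Once this is done, $\hat c$ and $\hat b$ are mutual two-sided inverses in $R\otimes D$ (since $bc=cb=1\otimes s$ there), hence in $R\otimes S$; the appeal to Montgomery's theorem and stable finiteness is superfluous.
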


\begin{proof}
Let $S$ be a UFD and let $R$, $\varphi$, $b$, $c$, $s$, $s_k$, $t_l$ 
be as in Proposition \ref{p}. Since $S$ is a UFD, $t_1,\dots,t_d$ have a greatest common divisor $e$ and $c$ can be replaced with $(1\otimes e^{-1})c$, we can without loss of generality assume that $t_1,\dots,t_d$ are coprime. Then it suffices to prove that $s^{-1}s_k\in S$ for every $k$. Since $t_ls^{-1}s_k\in S$ for every $k,l$, we see that $s$ divides $t_ls_k$ for every $l,k$. Let $p$ be a prime such that $p^n$ divides $s$. Suppose that $p^n$ does not divide $s_{k_0}$ for some $k_0$. Since $p^n$ divides $t_ls_{k_0}$ for every $l$, we conclude that $p$ divides $t_l$ for every $l$, which contradicts the assumption about $t_l$ being coprime. Hence $s$ divides $s_k$ for every $k$.
\end{proof}

We now move to the noncommutative setting. Since embeddings of noncommutative domains into division rings can be ill-behaved or nonexistent, one needs stronger assumptions than in Corollary \ref{ufd}. Let $S$ be an arbitrary ring. The {\bf inner rank} of $A\in S^{m\times n}$ is the least $r$ such that $A=BC$ for some $B\in S^{m\times r}$ and $C\in S^{r\times n}$. We write $\rho A=r$. For example, if $S$ is a division ring, then $\rho A$ is just the rank of $A$. We say that $S$ is a {\bf Sylvester domain} \cite[Section 5.5]{Coh} if for any $P\in S^{\ell\times m}$ and $Q\in S^{m\times n}$ such that $PQ=0$, it follows that $\rho P+\rho Q\le m$.

We say that an element $s\in S$ {\bf right divides} $a\in S$ if $a=a's$ for some $a'\in S$. If $S$ is a domain and $a,b\in S\setminus\{0\}$, then $s$ is a {\bf highest common right factor} (HCRF) of $a$ and $b$ if $s$ right divides $a,b$ and every $s'\in S$ that right divides $a,b$ also right divides $s$. We say that $S$ is an {\bf HCRF domain} if every pair of nonzero elements in $S$ admits a HCRF. Special examples of HCRF domains are filtered rings satisfying the 2-term weak algorithm \cite[Section 2.8]{Coh} or more generally, 2-firs with right ACC$_1$  (ascending chain condition on principal right ideals) \cite[Exercise 3.2.1]{Coh}.

\begin{theorem}\label{t:sylv}
If $S$ is an HCRF domain and a Sylvester domain, then $S$ is an SN algebra.
\end{theorem}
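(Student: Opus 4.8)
\textbf{Proof strategy for Theorem \ref{t:sylv}.}
The plan is to reuse the scaffolding provided by Lemma \ref{l} and Proposition \ref{p}, but to replace the "embed into a division ring'' argument by a direct rank computation over $S$ using the Sylvester condition. First I would fix a basis $\{r_1,\dots,r_d\}$ of $R$ and a homomorphism $\varphi\colon R\to R\otimes S$, and invoke Lemma \ref{l} to obtain elements $c_1,\dots,c_d\in R\otimes S$ satisfying (a), (b), (c), together with the left-divisibility relations $b_{kl}c_k=1\otimes s_{kl}$ where $c_k=\sum_l r_l\otimes s_{kl}$. Writing $c=c_1$ (after relabelling, it is harmless to assume $c_1\neq 0$, using (c)), the key point is that (b), i.e.\ $\varphi(x)c=cx$ for all $x\in R$, already says that $c$ is a candidate for the conjugating element; what remains is to prove that $c$ is invertible in $R\otimes S$. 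Since $R\otimes S\cong M_d(S)$ as a ring (after picking the matrix units coming from $R\cong M_?\!(F)$ — more carefully, $R\otimes S$ is a free $S$-module of rank $d$ with a ring structure, and I would phrase the argument in terms of the representation of left multiplication by $c$ on this module, a $d\times d$ matrix over $S$), invertibility of $c$ is equivalent to its inner rank being $d$.

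The second step, which I expect to be the heart of the matter, is to bound the inner rank from below. From (b) and the computation in Lemma \ref{l} we know $bc=1\otimes s$ for a nonzero $s\in S$, so $c$ has a nonzero left multiple that is a unit; dually, since $c\neq 0$, the left-multiplication-by-$c$ map on the free module $S^d$ is injective after tensoring with the division ring of fractions — but we are avoiding that. Instead, using the Sylvester domain hypothesis: if $\rho(c)=r<d$ as an element of $S^{d\times d}$, then write the matrix of $c$ as $BC$ with $B\in S^{d\times r}$, $C\in S^{r\times d}$; because $C$ has a left annihilator of positive inner rank by the Sylvester law applied to an extension of $C$ to a square zero product, one produces a nonzero $P\in S^{1\times d}$ (a row) with $Pc=0$. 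This contradicts $bc=1\otimes s\neq 0$: left-multiply the relation $P\cdot(\text{matrix of }c)=0$ appropriately, or rather observe that $1\otimes s$ being a unit forces the column span of (the matrix of) $c$ to be all of $S^d$, hence $\rho(c)=d$. Thus $c$ is full, and since $S$ is a Sylvester domain it is in particular stably finite (full square matrices over Sylvester domains are non-zero-divisors in the matrix ring and, via $\rho$, one-sided inverses are two-sided), so $c$ is actually invertible in $M_d(S)=R\otimes S$ — this is exactly where I would cite the relevant fact that over a Sylvester domain a full $n\times n$ matrix which has a one-sided inverse has a two-sided inverse, or equivalently combine Montgomery's theorem \cite[Theorem 1]{Mon} with stable finiteness.

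The role of the HCRF hypothesis is the final cleanup. Proposition \ref{p}, adapted, gives us $c^{-1}=(1\otimes s^{-1})b$ with $b=\sum_k r_k\otimes s_k$ and $c=\sum_l r_l\otimes t_l$, and the divisibility relations $t_l s^{-1} s_k\in S$ for all $k,l$ once we have inverted $c$ in the appropriate localization; but to make sense of $s^{-1}$ inside $S$ we must cancel a common right factor. Here is where I would use that $S$ is an HCRF domain: extract a highest common right factor $e$ of $t_1,\dots,t_d$ (iterating the pairwise HCRF), replace $c$ by $c(1\otimes e)^{-1}$ — legitimate since $1\otimes e$ will be shown to right-divide $c$ in $R\otimes S$ — so that the new $t_l$'s have no common right factor, and then the relations $t_l s^{-1} s_k\in S$, i.e.\ $s$ right divides $t_l s_k$ for all $l$, together with coprimality of the $t_l$, force $s$ to right divide each $s_k$; hence $s^{-1}b\in R\otimes S$ and $c^{-1}=(1\otimes s^{-1})b\in R\otimes S$ outright. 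Combining with step two, $c$ is a unit of $R\otimes S$ with $\varphi(x)=cxc^{-1}$, proving $S$ is SN.

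\textbf{Main obstacle.} I expect the genuinely delicate step to be the lower bound on the inner rank of $c$ over $S$ — translating "$c$ has a nonzero unit left multiple'' into "$c$ is a full matrix'' via the Sylvester law of nullity, and then upgrading fullness plus a one-sided inverse to genuine invertibility over $S$; everything else is a careful transcription of the division-ring arguments of Corollary \ref{ufd} and Proposition \ref{p} with $S$ in place of $D$, the HCRF property standing in for unique factorization.
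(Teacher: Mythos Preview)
Your proposal has genuine gaps in both of its main steps.

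In step two you argue that $c$ is full as a $d\times d$ matrix over $S$ and then assert that fullness, together with stable finiteness, makes $c$ invertible in $R\otimes S$. But fullness does not imply invertibility over a Sylvester domain: already for $S=F[\xi]$ the $1\times 1$ matrix $(\xi)$ is full and not invertible. A full matrix over a Sylvester domain becomes invertible only in the universal skew field of fractions, not in $M_d(S)$. Nor do you have a one-sided inverse of $c$ over $S$: the relation $bc=1\otimes s$ yields a left inverse $(1\otimes s^{-1})b$ only after inverting $s$, and $s$ is merely nonzero, not a unit. So step two does not establish invertibility, and the entire burden falls on the HCRF step.

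There your sketch transplants the commutative argument of Corollary~\ref{ufd}: from ``$t_ls^{-1}s_k\in S$ for all $l$'' and right-coprimality of $t_1,\dots,t_d$ you want to conclude $s^{-1}s_k\in S$. In a noncommutative domain this does not go through. First, your rewriting ``$t_ls^{-1}s_k\in S$, i.e.\ $s$ right divides $t_ls_k$'' is already invalid, since one cannot commute $s^{-1}$ past $s_k$. Second, even granting some divisibility relation, right-coprimality of the $t_l$ gives no control over left factors of the products $t_ls_k$; there is no noncommutative analogue of the prime-by-prime argument in Corollary~\ref{ufd}. The paper does not attempt any such divisibility chase. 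It uses the Sylvester hypothesis in precisely the way you set out to avoid: $S$ embeds into its universal skew field of fractions $D$ in an inner-rank-preserving fashion, so for each $k$ the $d\times 2$ matrix with rows $(t_l,\,t_lu_k)$ has the kernel vector $(u_k,-1)^{\rm t}$ over $D$, hence has inner rank $1$ over $D$ and therefore over $S$. Factoring it over $S$ as a column times a row $(w_1,w_2)$, right-coprimality of the $t_l$ forces the common right factor $w_1$ to be a unit, whence $u_k=w_1^{-1}w_2\in S$. Thus the Sylvester property is used to transfer a rank-$1$ condition from $D$ back to $S$, not to bound $\rho(c)$ from below; and the HCRF property is applied to the resulting right factor $w_1$, not to $s$.
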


\begin{proof}
Since $S$ is a Sylvester domain, it admits a universal skew field of fractions $D$ and this embedding preserves the inner rank by \cite[Theorem 7.5.13]{Coh}. Let $R$ be a central simple algebra and $\varphi:R\to R\otimes S$ a homomorphism. By Proposition \ref{p} there exists $c\in R\otimes S$ invertible in $R\otimes D$ such that $\varphi(x)=cxc^{-1}$ for all $x\in R$. Furthermore, if $\{r_1,\dots,r_d\}$ is a basis of $R$ and
$$c=\sum_l r_l\otimes t_l,\qquad c^{-1} =\sum_k r_k\otimes u_k$$
for $t_l\in S$ and $u_k\in D$, then $t_lu_k=s_{lk}\in S$ for all $1\le l,k\le d$ (here $u_k=s^{-1}s_k$ from Proposition \ref{p}). Since $S$ is an HCRF domain, we can assume that $t_1,\dots,t_d$ have no non-trivial common right factors (otherwise they have a nontrivial HCRF $e$ and we can replace $c$ with $c(1\otimes e^{-1})$). Fix $k$ such that $u_k\neq 0$. Then $(u_k,-1)^{\rm t}\in D^2$ belongs to the right kernel of the matrix
$$\begin{pmatrix}
t_1 & s_{1k} \\
\vdots & \vdots \\
t_d & s_{dk}
\end{pmatrix}\in S^{d\times 2}$$
which is therefore of (inner) rank $1$ over $D$. Since the embedding $S\subseteq D$ is inner rank preserving, this matrix is also of inner rank 1 over $S$, so
$$\begin{pmatrix}
t_1 & s_{1k} \\
\vdots & \vdots \\
t_d & s_{dk}
\end{pmatrix}=
\begin{pmatrix}
v_1 \\ \vdots \\ v_d
\end{pmatrix}
\begin{pmatrix}
w_1 & w_2
\end{pmatrix}$$
for some $v_i,w_j\in S$. Since $w_1$ right divides $t_l$ for every $l$ and $t_1,\dots,t_d$ are right coprime by assumption, we conclude that $w_1$ is invertible in $S$. By taking some $t_l\neq0$ we get
$$u_k=t_l^{-1}s_{lk}=w_1^{-1}v_l^{-1}v_lw_2=w_1^{-1}w_2\in S.$$
Consequently $c^{-1}\in R\otimes S$.
\end{proof}

\begin{corollary}\label{cor:free}
Every free algebra $F\langle X\rangle$ is an SN algebra.
\end{corollary}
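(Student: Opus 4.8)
The plan is to deduce Corollary \ref{cor:free} directly from Theorem \ref{t:sylv}, so the only work is to check that a free algebra $F\langle X\rangle$ satisfies the two hypotheses of that theorem: it is a Sylvester domain and an HCRF domain. Both facts are classical and due to Cohn, so the proof will essentially be a matter of citing \cite{Coh} in the right places; let me lay out which results are invoked and why.

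First I would recall that the free algebra $F\langle X\rangle$ is a fir (free ideal ring): every left ideal and every right ideal is free of unique rank. This is \cite[Corollary 2.5.2]{Coh} or the discussion in \cite[Section 2.5]{Coh}. In particular $F\langle X\rangle$ is a $2$-fir, and being $\mathbb{N}$-filtered by total degree with $F$ sitting in degree $0$, it satisfies the weak algorithm, hence in particular the $2$-term weak algorithm. By the remark immediately preceding Theorem \ref{t:sylv} (``filtered rings satisfying the 2-term weak algorithm'' are HCRF domains, cf. \cite[Section 2.8]{Coh}), it follows that $F\langle X\rangle$ is an HCRF domain. Alternatively one can note it is a $2$-fir with right ACC$_1$ and invoke \cite[Exercise 3.2.1]{Coh}; either route works.

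Next, for the Sylvester-domain property, I would cite the fact that every fir — more precisely every semifir — is a Sylvester domain \cite[Corollary 5.5.2]{Coh} (a semifir has the property that finitely generated projective left or right ideals are free of unique rank, and such rings are Sylvester domains). Since $F\langle X\rangle$ is a fir, it is a semifir, hence a Sylvester domain. With both hypotheses of Theorem \ref{t:sylv} verified, that theorem immediately gives that $F\langle X\rangle$ is an SN algebra, completing the proof.

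The only point requiring a little care is that the excerpt's statement of Theorem \ref{t:sylv} is already tailored exactly to this application, so there is no real obstacle — the ``hard part'' is purely bibliographic, namely pinning down the precise statements in \cite{Coh} that say a free algebra is simultaneously a fir (hence semifir, hence Sylvester domain) and satisfies the $2$-term weak algorithm (hence is an HCRF domain). One should make sure the cited results cover free algebras over an arbitrary field and on an arbitrary (possibly infinite) alphabet $X$, which they do, since Cohn's treatment of $F\langle X\rangle$ via the weak algorithm is stated at that level of generality.
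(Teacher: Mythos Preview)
Your proposal is correct and follows essentially the same approach as the paper: both verify the hypotheses of Theorem \ref{t:sylv} by citing Cohn's book to show that $F\langle X\rangle$ satisfies the weak algorithm (hence is an HCRF domain) and is a fir (hence a semifir, hence a Sylvester domain). The specific theorem numbers you cite differ slightly from the paper's (which uses \cite[Theorem 2.5.3, Theorem 2.4.6, Proposition 5.5.1]{Coh}), but the logic is identical.
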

\begin{proof} A free algebra is a filtered ring with a weak algorithm \cite[Theorem 2.5.3]{Coh}, so it is a HCRF domain and a fir (free ideal ring) by \cite[Theorem 2.4.6]{Coh} and hence a Sylvester domain by \cite[Proposition 5.5.1]{Coh}.
\end{proof}

Theorem \ref{t:sylv} has the following form for commutative rings.
\begin{corollary}\label{bez}
Every B\'ezout domain  is an SN algebra.
\end{corollary}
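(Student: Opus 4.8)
The plan is to derive Corollary~\ref{bez} from Theorem~\ref{t:sylv} by verifying that a B\'ezout domain satisfies the two hypotheses there: being an HCRF domain and being a Sylvester domain. Recall that a B\'ezout domain is a (commutative) integral domain in which every finitely generated ideal is principal.

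First I would check the HCRF condition. Given nonzero $a,b\in S$, the ideal $aS+bS$ is principal, say $aS+bS=sS$. Then $s$ divides both $a$ and $b$ (so, in the commutative setting, $s$ right divides both), and conversely any $s'$ dividing both $a$ and $b$ divides every element of $aS+bS=sS$, hence divides $s$. Thus $s$ is a highest common right factor, and $S$ is an HCRF domain.

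Next I would verify that a B\'ezout domain is a Sylvester domain. The cleanest route is to invoke the structure theory from \cite{Coh}: a commutative B\'ezout domain is a (commutative) semifir, equivalently a Pr\"ufer domain that is also a GCD domain, and in particular it is a $2$-fir; by \cite[Proposition 5.5.1]{Coh} every semifir is a Sylvester domain. Alternatively one can argue directly: $S$ embeds in its field of fractions $K$, and for a commutative B\'ezout domain every finitely generated submodule of $S^n$ is free (finitely generated torsion-free modules over a B\'ezout domain are free), which gives the inner-rank additivity $\rho P+\rho Q\le m$ whenever $PQ=0$ by comparing with ranks over $K$. Either way, the Sylvester property holds.

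With both hypotheses in hand, Theorem~\ref{t:sylv} applies directly and yields that $S$ is an SN algebra. The only mild obstacle is making sure the cited facts about B\'ezout domains (finitely generated torsion-free modules are free, and the resulting semifir/Sylvester-domain status) are quoted at the right level of generality from \cite{Coh}; no new computation is needed beyond that bookkeeping.

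\begin{proof}
Let $S$ be a B\'ezout domain, so that every finitely generated ideal of $S$ is principal. We check that $S$ satisfies the hypotheses of Theorem~\ref{t:sylv}. First, $S$ is an HCRF domain: given nonzero $a,b\in S$, write $aS+bS=sS$; then $s$ divides both $a$ and $b$, and any $s'$ dividing both $a$ and $b$ divides every element of $aS+bS=sS$, hence divides $s$, so $s$ is a highest common right factor of $a$ and $b$. Second, a commutative B\'ezout domain is a semifir, hence in particular a $2$-fir, and therefore a Sylvester domain by \cite[Proposition 5.5.1]{Coh}. Theorem~\ref{t:sylv} now shows that $S$ is an SN algebra.
\end{proof}
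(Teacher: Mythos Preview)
Your proof is correct and follows essentially the same route as the paper: verify the HCRF property (the paper phrases this as ``a B\'ezout domain is a GCD domain, which is just a commutative HCRF domain''), then observe that a commutative B\'ezout domain is a semifir (the paper cites \cite[Proposition 2.3.17]{Coh} for this step) and hence a Sylvester domain by \cite[Proposition 5.5.1]{Coh}, so Theorem~\ref{t:sylv} applies. The only difference is that you spell out the GCD argument explicitly rather than citing it, and you omit the reference for the semifir claim.
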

\begin{proof}
Every B\'ezout domain is a GCD domain, which is just a commutative HCRF domain. Moreover, by \cite[Proposition 2.3.17]{Coh} it is also a semifir and hence a Sylvester domain by \cite[Proposition 5.5.1]{Coh}. Therefore Theorem \ref{t:sylv} applies.
\end{proof}

In the next example we present a domain that is not an SN algebra;
cf.~\cite[Theorem 15]{RZ}.

\begin{example}\label{ex:elliptic}
Let $S=F[x,y]/(y^2-x^3-x)$. Then $S$ is a domain,
$$a=\begin{pmatrix}y & x \\ x^2 & y\end{pmatrix}\in M_2(S)$$
is invertible as a matrix over the field of fractions of $S$ and
$$a^{-1}=\begin{pmatrix}\frac{y}{x} & -1 \\ -x & \frac{y}{x}\end{pmatrix}.$$
Since every product of an entry in $a$ and an entry in $a^{-1}$ lies in $S$, it follows that
$$\varphi\colon M_2(F)\to M_2(F)\otimes S,\qquad u\mapsto aua^{-1}$$
is a well-defined homomorphism. Suppose there exists an invertible $c\in M_2(S)$ such that $\varphi(u)=cuc^{-1}$ for all $u\in M_2(F)$. Then $\gamma=\det(c)$ is invertible in $S$ and it is easy to see that this implies $\gamma\in F\setminus\{0\}$. Since $c^{-1}a$ commutes with every $u\in M_2(F)$ by the definition of $\varphi$, we have $c^{-1}a=f I_2$ for some $f\in S$. But then
$$\gamma f^2=\gamma \det(c^{-1}a)=\det(a)= x$$
contradicts the irreducibility of $x$ in $S$.
\end{example}

\section{Polynomial matrix algebras}\label{s:matrixpoly}

In this section we prove that matrix algebras over polynomial algebras are SN.

\begin{theorem}\label{t:matrixpoly}
	$M_n(F[\xi_1,\dots,\xi_s])$ is an SN algebra.
\end{theorem}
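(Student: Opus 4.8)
The plan is to combine Proposition \ref{p:mat}, which reduces the SN property of $S=M_n(F[\underline\xi])$ (where $\underline\xi=(\xi_1,\dots,\xi_s)$) to understanding homomorphisms $M_m(F)\to M_m(S)$, with the observation that $M_m(S)\cong M_{mn}(F[\underline\xi])$. Concretely, fix a homomorphism $\psi\colon M_m(F)\to M_m(M_n(F[\underline\xi]))\cong M_{mn}(F[\underline\xi])$; by Proposition \ref{p:mat} it suffices to show $\psi$ is conjugation by an invertible element of $M_{mn}(F[\underline\xi])$. Such a $\psi$ is determined by the image of the matrix units, i.e.\ by a full set of $m\times m$ matrix units $\{E_{ij}\}$ inside $M_{mn}(F[\underline\xi])$. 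The idempotent $e=\psi(E_{11})$ is then a rank-$n$ idempotent in $M_{mn}(F[\underline\xi])$, and $M_{mn}(F[\underline\xi])$ decomposes as a direct sum of $m$ copies of the left ideal $M_{mn}(F[\underline\xi])\,e$, which is a finitely generated projective $F[\underline\xi]$-module.

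The key step is to invoke the Quillen--Suslin theorem: every finitely generated projective module over $F[\xi_1,\dots,\xi_s]$ is free. Thus the column module $P:=M_{mn}(F[\underline\xi])\,e$ is free of rank equal to its rank as a projective module, which one computes (by localizing at the zero ideal, or by comparing with the trivial embedding $E_{ij}\mapsto E_{ij}$) to be $n$. Choosing an $F[\underline\xi]$-basis of $P$ and likewise of $(1-e_{\text{triv}})$-complement pieces, one assembles these into an invertible matrix $c\in M_{mn}(F[\underline\xi])=M_m(S)$ that carries the standard matrix units $E_{ij}$ to $\psi(E_{ij})$; equivalently, $c$ intertwines the two decompositions of the free module $F[\underline\xi]^{mn}$ into $m$ isomorphic free summands of rank $n$ arising from the two systems of matrix units. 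Then $\psi(x)=cxc^{-1}$ for all $x\in M_m(F)$, as required.

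More carefully, I would phrase the matrix-unit bookkeeping as follows. Let $\{f_{ij}\}$ be the standard matrix units of $M_m(F)\subseteq M_m(S)=M_{mn}(F[\underline\xi])$ and set $g_{ij}=\psi(f_{ij})$. Both $\{f_{ij}\}$ and $\{g_{ij}\}$ are complete systems of $m\times m$ matrix units in $M_{mn}(F[\underline\xi])$, so by \cite[Lemma 2.52]{INCA} (the same device used in the proof of Theorem \ref{tfd}) we get $M_{mn}(F[\underline\xi])\cong M_m(T)$ and $\cong M_m(T')$ for the centralizers $T=f_{11}M_{mn}(F[\underline\xi])f_{11}$ and $T'=g_{11}M_{mn}(F[\underline\xi])g_{11}$; here $T\cong M_n(F[\underline\xi])$ canonically. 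The left modules $M_{mn}(F[\underline\xi])f_{11}$ and $M_{mn}(F[\underline\xi])g_{11}$ are finitely generated projective over $F[\underline\xi]$ and become isomorphic after tensoring to the fraction field (since both have the same rank $n$, the rank being detectable from traces of idempotents), hence are stably isomorphic, hence — by Quillen--Suslin — free of rank $n$. An isomorphism of these free modules lifting to an automorphism of $F[\underline\xi]^{mn}$ respecting the column decompositions yields the desired $c$.

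The main obstacle is not conceptual but the linear-algebra/module-theoretic bookkeeping: one must verify that the two idempotents $e=\sum_i g_{ii}\cdot(\text{something})$—really the rank-$n$ idempotents $f_{11}$ and $g_{11}$—are conjugate in $M_{mn}(F[\underline\xi])$ by an invertible element, and then upgrade a conjugacy of single idempotents to an intertwiner of the full systems of matrix units. The first is exactly where Quillen--Suslin enters (conjugacy of idempotents over $F[\underline\xi]$ of the same rank, via freeness of the associated projectives); the second is the standard fact that once $uf_{11}u^{-1}=g_{11}$, the element $c=\sum_i g_{i1}\,u\,f_{1i}$ is invertible and satisfies $c f_{ij}c^{-1}=g_{ij}$ for all $i,j$. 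Assembling these gives $\psi=\operatorname{Ad}(c)$, completing the proof via Proposition \ref{p:mat}.
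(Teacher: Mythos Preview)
Your argument is correct and constitutes a genuinely different proof from the paper's. Both proofs hinge on Quillen--Suslin, but they reach it by different roads. The paper works with an arbitrary central simple $R$: it first conjugates $\varphi$ over the fraction field $K$ to obtain $a\in R\otimes M_n(A)$ with $\varphi(x)=a(x\otimes 1)a^{-1}$, then introduces a factorization lemma (Lemma~\ref{l:denominators}) linking factorizations $a=u(1\otimes c)$ with $u$ invertible to bases of the $A$-module $M(a)=\{r\in A^{1\times n}: (1\otimes r)a^{-1}\in R\otimes A^{1\times n}\}$. To see that $M(a)$ is free it localizes at each prime $P$, invokes Theorem~\ref{thm:semiLoc} (semilocal algebras are SN) for $M_n(A_P)$ to get a local factorization, deduces $M(a)$ is locally free hence projective, and then applies Quillen--Suslin. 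Your route instead uses Proposition~\ref{p:mat} to reduce to $R=M_m(F)$, and then argues directly with matrix units: the image $g_{11}A^{mn}$ of the idempotent $g_{11}=\psi(f_{11})$ is a finitely generated projective $A$-module, free of rank $n$ by Quillen--Suslin (and likewise for its complement), so $g_{11}$ is conjugate to $f_{11}$ by some $u\in GL_{mn}(A)$; the standard element $c=\sum_i g_{i1}uf_{1i}$ then conjugates the whole system. Your approach is more elementary in that it avoids Lemma~\ref{l:denominators} and Theorem~\ref{thm:semiLoc}, at the cost of invoking Proposition~\ref{p:mat} (hence the Brauer group); the paper's approach keeps $R$ general throughout and isolates the factorization lemma, which has independent interest.

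One notational slip to fix: you write the ``column module'' as $P=M_{mn}(F[\underline\xi])\,e$ and assert it has rank $n$. That object is the left ideal, which as an $F[\underline\xi]$-module has rank $mn\cdot n$. What you actually use (and what has rank $n$) is the image module $e\cdot F[\underline\xi]^{\,mn}$; the rank computation is then clean, since the $m$ mutually isomorphic summands $g_{ii}F[\underline\xi]^{\,mn}$ add up to $F[\underline\xi]^{\,mn}$. With that correction the argument goes through as written.
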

Besides the Quillen-Suslin theorem, saying that over $F[\xi_1,\dots,\xi_s]$ every finitely generated projective module is free, see \cite{Q,Sus}, the proof of this theorem is mostly based on the following simple factorization lemma.
\begin{lemma}
	\label{l:denominators}
	Let $A$ be a commutative algebra, which is a domain with field of fractions $K$, $R$ a central simple algebra and $a\in R\otimes M_n(A)$.
	Suppose that $a$ is invertible in $R\otimes M_n(K)$ and that $a(x\otimes 1)a^{-1}\in R\otimes M_n(A)$ for all $x\in R$. Then for $c\in M_n(A)$ the following are equivalent:
	\begin{enumerate}
		\item[(i)] There exists a factorization $a=u(1\otimes c)$ for some invertible $u\in R\otimes M_n(A)$;
		\item[(ii)] The left ideal
			\[
				I(a):=\{\, m\in M_n(A) \mid (1\otimes m)a^{-1}\in R\otimes M_n(A)\,\}\subseteq M_n(A)
			\]
			is generated by $c$;
		\item[(iii)] The rows of $c$ form a basis of the $A$-module
			\[
				M(a):=\{\, r\in A^{1\times n} \mid (1\otimes r)a^{-1}\in R\otimes A^{1\times n}\,\}.
			\]
	\end{enumerate}

\end{lemma}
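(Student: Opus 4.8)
The plan is to prove the three equivalences in a cycle: first establish the relationship between the left ideal $I(a)$ and the module $M(a)$ so that (ii) $\Leftrightarrow$ (iii) is essentially bookkeeping, then prove (i) $\Rightarrow$ (ii) and finally (ii) $\Rightarrow$ (i). A preliminary observation to record is that $I(a)$ really is a left ideal of $M_n(A)$: if $(1\otimes m)a^{-1}\in R\otimes M_n(A)$ then $(1\otimes m'm)a^{-1}=(1\otimes m')\big((1\otimes m)a^{-1}\big)\in R\otimes M_n(A)$ for any $m'\in M_n(A)$, and it is clearly closed under addition. Moreover $I(a)$ is nonzero: since $a\in R\otimes M_n(A)$ has entries in $A$ and $a^{-1}\in R\otimes M_n(K)$, clearing the (finitely many) denominators of the entries of $a^{-1}$ shows that $fI_n\in I(a)$ for a suitable nonzero $f\in A$. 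The equivalence (ii) $\Leftrightarrow$ (iii) comes from the standard identification of left ideals of $M_n(A)$ with $A$-submodules of $A^{1\times n}$: a left ideal $J\subseteq M_n(A)$ corresponds to the row-span module $\{r\in A^{1\times n}: r \text{ is a row of some }m\in J\}$, and $J$ is generated (as a left ideal) by $c$ exactly when $M(a)$ is generated (as an $A$-module) by the rows of $c$; one checks $I(a)$ corresponds to $M(a)$ directly from the definitions, using that $(1\otimes m)a^{-1}\in R\otimes M_n(A)$ iff each row of $(1\otimes m)a^{-1}$ lies in $R\otimes A^{1\times n}$.

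For (i) $\Rightarrow$ (ii): suppose $a=u(1\otimes c)$ with $u$ invertible in $R\otimes M_n(A)$. Then $a^{-1}=(1\otimes c)^{-1}u^{-1}$ in $R\otimes M_n(K)$. For $m\in M_n(A)$ we have $(1\otimes m)a^{-1}=(1\otimes mc^{-1})u^{-1}$, wait—$c$ need not be invertible, so instead argue: $m\in I(a)$ means $(1\otimes m)a^{-1}=(1\otimes m)(1\otimes c^{-1})u^{-1}\in R\otimes M_n(A)$. Hmm, $c^{-1}$ may not exist. I would instead argue as follows. First, any $c$ satisfying (i) must be a left zero divisor compatible with $a$; more cleanly, since $u$ is invertible over $A$, the condition $(1\otimes m)a^{-1}\in R\otimes M_n(A)$ is equivalent to $(1\otimes m)(1\otimes c)^{-1}\in R\otimes M_n(A)$, where $(1\otimes c)^{-1}$ is taken in $R\otimes M_n(K)$ and exists because $c$ is invertible over $K$ (as $a$ is invertible over $K$ and $u$ over $A\subseteq K$, so $\det c\neq 0$). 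Thus $I(a)=\{m\in M_n(A): mc^{-1}\in M_n(A)\}$, where $c^{-1}\in M_n(K)$. Now $c\in I(a)$ trivially, so $M_n(A)c\subseteq I(a)$; conversely if $mc^{-1}=m'\in M_n(A)$ then $m=m'c\in M_n(A)c$. Hence $I(a)=M_n(A)c$, which is (ii). The same computation, read in terms of row modules, gives that $M(a)$ consists exactly of those $r\in A^{1\times n}$ with $rc^{-1}\in A^{1\times n}$, and this equals the $A$-row-span of $c$, so the rows of $c$ form a generating set; that they form a \emph{basis} follows since $c$ is invertible over $K$, hence its rows are $K$-linearly independent, hence $A$-linearly independent.

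For (ii) (equivalently (iii)) $\Rightarrow$ (i): assume the rows of $c$ form a basis of $M(a)$. The key point is that $M(a)$ is a finitely generated projective, in fact free, $A$-module of rank $n$: it contains the free module $fA^{1\times n}$ for the nonzero $f$ found above, so it has rank $n$; and I claim it is projective. Here is where the structure enters: consider the $A$-module map $A^{1\times n}\to R\otimes (A^{1\times n}\big/\,?)$—more directly, $M(a)$ is the preimage under the map $A^{1\times n}\to (R\otimes K^{1\times n})/(R\otimes A^{1\times n})$, $r\mapsto (1\otimes r)a^{-1}+R\otimes A^{1\times n}$, of zero, i.e. $M(a)=\ker$ of an $A$-linear map; finite generation follows since $A$ is noetherian when we specialize (but $A$ is a general commutative domain here—so instead I argue $M(a)\cong I(a)$-column-wise relates to a projective via $a^{-1}$). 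The cleanest route: the map $r\mapsto (1\otimes r)a^{-1}$ identifies $M(a)$ with an $A$-submodule $N\subseteq R\otimes A^{1\times n}\cong A^{1\times dn}$ (where $d=\dim R$), and the inverse identification shows $N$ is exactly $\{s\in R\otimes A^{1\times n}: (1\otimes s)a\in R\otimes A^{1\times n}\}$-type condition making $M(a)\oplus(\text{complement})$ free. Granting projectivity, a basis of $M(a)$ of size $n$ assembles into the matrix $c$, and I must show $a=u(1\otimes c)$ with $u$ invertible over $A$. Set $u:=a(1\otimes c)^{-1}\in R\otimes M_n(K)$; I need $u, u^{-1}\in R\otimes M_n(A)$. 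Since the rows $r_1,\dots,r_n$ of $c$ lie in $M(a)$, each $(1\otimes r_i)a^{-1}\in R\otimes A^{1\times n}$, i.e. $(1\otimes c)a^{-1}=u^{-1}\in R\otimes M_n(A)$. For $u\in R\otimes M_n(A)$: the rows of $fI_n$ lie in $M(a)$ (since $f\in I(a)$ in the scalar sense), so $fe_j\in M(a)=\sum_i Ar_i$, giving $f e_j=\sum_i a_{ij}r_i$ with $a_{ij}\in A$, i.e. $fI_n = \tilde a\, c$ for some $\tilde a\in M_n(A)$; thus $(1\otimes c)^{-1}=f^{-1}(1\otimes\tilde a)$ and $u=a(1\otimes c)^{-1}=f^{-1}a(1\otimes\tilde a)\in R\otimes M_n(K)$ has entries in $f^{-1}A$; combined with $u^{-1}\in R\otimes M_n(A)$ and $\det u^{-1}\cdot\det u=1$ one deduces $\det u$ is a unit in $A$ and then $u\in R\otimes M_n(A)$ with $u^{-1}$ already known integral—actually once $u^{-1}\in R\otimes M_n(A)$, invertibility of $u$ over $A$ requires $u\in R\otimes M_n(A)$ too, which follows because $u=u^{-1}{}^{-1}$ and... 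I would finish by showing $\det(u^{-1})$ is a unit: $\det(u^{-1})\det(c)=\det(a)$ up to the $R^{\otimes}$-norm, and since both $a^{-1}$ and $u^{-1}$ and (from $fI_n=\tilde a c$) also $c$ divides $fI_n$, a reduced-fraction argument pins $\det u\in A^\times$, whence $u=(\det u)^{-1}\cdot(\text{adjugate over }A)\in R\otimes M_n(A)$.

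\textbf{Main obstacle.} The delicate step is (ii)/(iii) $\Rightarrow$ (i): one must produce the matrix $c$ as literally the rows of a module basis and then verify that $u:=a(1\otimes c)^{-1}$ is \emph{two-sided} invertible over $A$, not merely that $u^{-1}=(1\otimes c)a^{-1}$ is integral. This amounts to controlling $\det u$, i.e.\ showing it is a unit of $A$, which is where finiteness of the denominators (the element $f$) and the domain/field-of-fractions setup are used; over a general commutative domain $A$ one cannot invoke noetherianity, so the argument must stay elementary and fraction-theoretic. The Quillen--Suslin input and the hypothesis that $c$'s rows form a basis (projectivity/freeness of $M(a)$) are precisely what guarantee such a $c$ exists in the first place, but within this lemma we only need the stated equivalence, so the freeness is used only to know $M(a)$ admits an $n$-element basis when needed in the application, not in proving (ii)$\Leftrightarrow$(iii).
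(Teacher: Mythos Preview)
Your arguments for (i)$\Rightarrow$(ii) and (ii)$\Leftrightarrow$(iii) are correct and agree with the paper's. The gap is in (ii)$\Rightarrow$(i): you never use the hypothesis that $a(x\otimes 1)a^{-1}\in R\otimes M_n(A)$ for all $x\in R$, and without it the implication is simply false. Take $R=M_2(F)$, $n=1$, $A$ any domain with a non-unit $p\in A$, and $a=\begin{pmatrix}p&0\\0&1\end{pmatrix}$. Then $I(a)=pA$, so $c=p$, but $u=a(1\otimes c)^{-1}=\begin{pmatrix}1&0\\0&p^{-1}\end{pmatrix}\notin M_2(A)$. Your determinant heuristic breaks right here: $u^{-1}\in M_2(A)$ with $\det u^{-1}=p$, $\det u=p^{-1}$, and their product is $1$ without $\det u$ being a unit of $A$. (Over a non-split $R$ the ``determinant'' you invoke would have to be a reduced norm, which makes the sketch even more delicate, but the commutative example already kills it.)

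The paper's route for (ii)$\Rightarrow$(i) uses the conjugation hypothesis in the expected way. Write $a=\sum_j r_j\otimes a_j$ with $\{r_j\}$ a basis of $R$ and $a_j\in M_n(A)$. Choosing $w_k,z_k\in R$ with $\sum_k w_k r_j z_k=\delta_{ij}$ (the dual-basis trick from Section~\ref{s:prelim}), one computes
\[
(1\otimes a_i)a^{-1}=\sum_k (w_k\otimes 1)\,a\,(z_k\otimes 1)\,a^{-1}\in R\otimes M_n(A),
\]
since each summand on the right lies in $R\otimes M_n(A)$ by hypothesis. Hence every coefficient $a_i$ lies in $I(a)=M_n(A)c$, so $a=u(1\otimes c)$ with $u\in R\otimes M_n(A)$; and $u^{-1}=(1\otimes c)a^{-1}\in R\otimes M_n(A)$ because $c\in I(a)$. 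This is the step your argument is missing: it is exactly what forces $u$ to be integral, not merely $u^{-1}$.
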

\begin{proof}
	(i)$\Rightarrow$(ii): Let $u\in R\otimes M_n(A)$ be invertible such that $a=u(1\otimes c)$. Then for any $m\in M_n(A)$ we have that $(1\otimes m)a^{-1}=(1\otimes mc^{-1})u^{-1}$ lies in $R\otimes M_n(A)$ if and only if $mc^{-1}\in M_n(A)$, i.e., $m\in M_n(A)c$.

	(ii)$\Rightarrow$(i): 
	Let $\{r_1,\dots,r_d\}$ be a basis of $R$ and for fixed $i$ take $w_k,z_k\in R$ such that $\sum_k w_k r_j z_k=\delta_{ij}$, see \eqref{oh} at the beginning of Section \ref{s:prelim}. If $a=\sum_jr_j\otimes a_j$, then
	\[
		(1\otimes a_i)a^{-1}=\sum_k (w_k\otimes 1)a(z_k\otimes 1)a^{-1}\in R\otimes M_n(A).
	\]
	Since $i$ was arbitrary, this shows that all coefficients $a_i$ of $a$ lie in $I(a)$. Assuming that $I(a)=M_n(A)c$, we can thus factor $a=u(1\otimes c)$ for some $u\in R\otimes M_n(A)$. But then $u^{-1}=(1\otimes c)a^{-1}\in M_n(A)$, i.e., $u$ is invertible.

	(ii)$\Rightarrow$(iii): Suppose $I(a)=M_n(A)c$. Since $a$ is invertible over $K$, there exists a nonzero $e\in A$ such that $e \cdot 1\in I(a)$. Hence the rows of $c$ are clearly linearly independent. Given any $r\in M(a)$, we can extend $r$ by zero to form a matrix $m\in I(a)$ which has $r$ as one of its rows. By assumption $m\in M_n(A)c$. In particular, $r$ is a linear combination of the rows of $c$, which shows that they form a basis of $M(a)$.

	(iii)$\Rightarrow$(ii): Conversely, suppose that the rows of $c$ form a basis of $M(a)$. In particular, $c\in I(a)$. Moreover, for any $m\in I(a)$ the rows of $m$ lie in $M(a)$ and are, therefore, linear combinations of the rows of $c$, which implies that $m\in M_n(A)c$.
\end{proof}
\begin{proof}[Proof of Theorem~\ref{t:matrixpoly}]
	Let $A:= F[\xi_1,\dots,\xi_s]$, $K$ its field of fractions, $R$ a central simple algebra and $\varphi\colon R\to R\otimes M_n(A)$ a homomorphism. Since $M_n(K)$ is SN, there exists (after clearing denominators) $a\in R\otimes M_n(A)$, invertible in $R\otimes M_n(K)$, such that $\varphi(x)=a(x\otimes 1)a^{-1}$ for all $x\in R$.

	Fix any prime ideal $P$ of $A$. Then $M_n(A_P)/{\rm rad}(M_n(A_P))$ is canonically isomorphic to the simple algebra $M_n(F)\otimes A_P/PA_P$, see Lemma~\ref{lemrad}. Therefore, $M_n(A_P)$ is semilocal and by Theorem~\ref{thm:semiLoc} it is also an SN algebra.

	It follows that there exists an invertible $u_P\in R\otimes M_n(A_P)$ such that $\varphi(x)=u_P(x\otimes 1)u_P^{-1}$. Then $u_P^{-1}a$ commutes with all elements of $R\otimes 1$ and thus lies in $1\otimes M_n(A)$. This means, $a$ can be factored as $a=u_P(1\otimes c_P)$ for some $c_P\in M_n(A)$. By Lemma~\ref{l:denominators}, this implies that the $A_P$-module $A_PM(a)$ is free of rank $n$. Since the prime ideal $P$ was arbitrary, this shows that $M(a)$ is locally free of rank $n$. As being projective is a local property, this implies $M(a)$ is projective. By the Quillen-Suslin theorem $M(a)$ is free of rank $n$. We choose $c\in M_n(A)$ such that its rows form a basis of $M(a)$. Then again from Lemma~\ref{l:denominators}, we get a factorization $a=u(1\otimes c)$ where $u\in R\otimes M_n(A)$ is invertible. Now $\varphi(x)=u(x\otimes 1)u^{-1}$ for all $x\in R$.
\end{proof}

\section{Formal power series}\label{s:poly}

The aim of this section is to show that the property of being an SN algebra transfers from  $S$ to the formal power series algebra $S[[\xi]]$.

\begin{theorem}\label{pow}
$S$ is an SN algebra if and only if $S[[\xi]]$ is an SN algebra.
\end{theorem}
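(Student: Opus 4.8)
The plan is to establish the two implications separately. For the easier direction, suppose $S[[\xi]]$ is an SN algebra. Since $S$ is a quotient of $S[[\xi]]$ via the substitution $\xi\mapsto 0$, I would first record the general principle that the SN property does \emph{not} automatically pass to quotients; instead I would argue directly. Given a central simple algebra $R$ and a homomorphism $\varphi\colon R\to R\otimes S$, compose with the canonical inclusion $S\hookrightarrow S[[\xi]]$ to get $\tilde\varphi\colon R\to R\otimes S[[\xi]]$; by hypothesis there is an invertible $c\in R\otimes S[[\xi]]$ with $\tilde\varphi(x)=cxc^{-1}$. Applying the algebra homomorphism $\mathrm{id}_R\otimes(\xi\mapsto 0)\colon R\otimes S[[\xi]]\to R\otimes S$ to the identity $\tilde\varphi(x)c=cx$ and noting that the image of an invertible element is invertible, one obtains an invertible $\bar c\in R\otimes S$ with $\varphi(x)\bar c=\bar cx$, hence $\varphi(x)=\bar cx\bar c^{-1}$. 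So $S$ is SN.

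For the substantive direction, assume $S$ is SN and let $R$ be a central simple algebra with basis $\{r_1,\dots,r_d\}$, and let $\varphi\colon R\to R\otimes S[[\xi]]$ be a homomorphism. The idea is to solve the conjugation equation degree by degree in $\xi$, i.e.\ to build $c=c^{(0)}+c^{(1)}\xi+c^{(2)}\xi^2+\cdots\in R\otimes S[[\xi]]$ so that $\varphi(x)c=cx$ for all $x\in R$. Reducing mod $\xi$ gives a homomorphism $\varphi_0\colon R\to R\otimes S$, and since $S$ is SN there is an invertible $c^{(0)}\in R\otimes S$ with $\varphi_0(x)=c^{(0)}x(c^{(0)})^{-1}$. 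Replacing $\varphi$ by $x\mapsto (c^{(0)})^{-1}\varphi(x)c^{(0)}$ (legitimate since $c^{(0)}$ embeds as an invertible element of $R\otimes S[[\xi]]$) we may assume $\varphi(x)\equiv x\pmod{\xi}$. Writing $\varphi(x)=x+\sum_{m\ge 1}\delta_m(x)\xi^m$ with $\delta_m\colon R\to R\otimes S$ linear, the homomorphism condition forces $\delta_1$ to be a derivation of $R$ into the $R$-bimodule $R\otimes S$, and by Corollary~\ref{cfd} (central simple algebras are separable) $\delta_1$ is inner: $\delta_1(x)=ax-xa$ for some $a\in R\otimes S$. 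Conjugating $\varphi$ by $1+a\xi$ (invertible in $R\otimes S[[\xi]]$) removes the degree-$1$ term, and one proceeds inductively: at stage $m$, after the lower terms have been killed, the homomorphism identity makes the leading obstruction $\delta_m$ a derivation $R\to R\otimes S$, which is again inner by Corollary~\ref{cfd}, and conjugation by $1+a_m\xi^m$ clears it. The resulting infinite product of conjugators converges $\xi$-adically to an invertible $c\in R\otimes S[[\xi]]$ implementing $\varphi$.

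The main obstacle is making the inductive ``clearing'' step precise: one must check that after conjugating by $1+a_m\xi^m$ the coefficients of $\xi^j$ for $j<m$ are genuinely unchanged (they are, since $1+a_m\xi^m\equiv 1\pmod{\xi^m}$) and that the new degree-$m$ coefficient of $\varphi$ is the old $\delta_m$ minus $(a_mx-xa_m)$, so that the right choice of $a_m$ annihilates it. The verification that $\delta_m$ is a derivation requires expanding $\varphi(xy)=\varphi(x)\varphi(y)$ and extracting the $\xi^m$-coefficient under the inductive hypothesis $\varphi(x)\equiv x\pmod{\xi^m}$: all cross terms involve lower-degree pieces which vanish, leaving exactly $\delta_m(xy)=\delta_m(x)y+x\delta_m(y)$. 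Finally, one should note that convergence of $\prod_{m\ge 1}(1+a_m\xi^m)$ is automatic in the $\xi$-adic topology, and that the limit is invertible because each partial product is and its inverse is likewise a coherent sequence; alternatively, invoke Lemma~\ref{l} together with stable finiteness of $S[[\xi]]$ (which holds whenever $S$ is stably finite, e.g.\ if $S$ is SN—though in general one argues directly that a power series with invertible constant term is invertible). This completes the proof.
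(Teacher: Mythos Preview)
Your proof is correct, but the forward direction proceeds along a genuinely different line from the paper's. After the common first step of reducing to $\varphi(x)\equiv x\pmod{\xi}$ via the SN property of $S$, the paper applies Lemma~\ref{l} directly: it produces elements $c_1,\dots,c_d\in (R\otimes S)[[\xi]]$ with $\varphi(x)c_k=c_kx$ and $\sum_k c_kr_k=1$, observes that the constant term $c_{k0}$ must commute with $R$ and hence lie in $1\otimes S$, and then reads off from $\sum_k c_{k0}r_k=1$ that some $c_{k0}$ is a nonzero scalar, so that $c_k$ has invertible constant term and is therefore a unit in $(R\otimes S)[[\xi]]$. This is a one-shot argument that uses nothing beyond the basic lemma. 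Your approach instead runs a deformation-theoretic induction: at each stage the leading coefficient $\delta_m$ is a derivation $R\to R\otimes S$, which is inner by separability of $R$ (Corollary~\ref{cfd}), and conjugation by $1+a_m\xi^m$ kills it; the $\xi$-adic limit of the conjugators does the job. Your method is more conceptual and would work verbatim with separability replacing the SN hypothesis in organizing the argument, while the paper's is shorter and stays entirely within the toolbox of Lemma~\ref{l}. One small caveat: your parenthetical remark that $S$ being SN implies $S$ is stably finite is not established anywhere in the paper and is not obviously true; fortunately your main argument does not need it, since the constructed $c$ has constant term $1$ and is therefore invertible in $(R\otimes S)[[\xi]]$ on elementary grounds.
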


\begin{proof}
$(\Rightarrow)$ Let $R$ be a central simple algebra and let $\varphi:R\to R\otimes S[[\xi]]$ be a homomorphism. Since $R$ is finite-dimensional, we can identify $R\otimes S[[\xi]]$ with $(R\otimes S)[[\xi]]$ and write
$$\varphi(x)= \varphi_0(x) + \varphi_1(x)\xi+ \varphi_2(x)\xi^2 + \dots$$
where $\varphi_i:R\to R\otimes S$. Note that $\varphi_0$ is an algebra homomorphism. By assumption, there exists an invertible element $a\in R\otimes S$ such that 
$\varphi_0(x)=axa^{-1}$ for all $x\in R$. Considering the map $x\mapsto a^{-1}\varphi(x)a$ 
we see that without loss of generality we may assume that $\varphi_0(x)=x$ for all $x\in R$, so that 
\begin{equation}\label{ena1}\varphi(x)= x + \varphi_1(x)\xi+ \varphi_2(x)\xi^2 + \dots\end{equation}
Now apply Lemma \ref{l}. Thus, let $\{r_1,\dots,r_d\}$ be  a basis of $R$ and let $c_1,\dots,c_d\in R\otimes S[[\xi]]$ be such that \begin{equation}\label{ena2}
\sum_{k=1}^d c_kr_k =1\,\,\,\mbox{  and 
}\,\,\,\varphi(x)c_k = c_kx\end{equation} for all $x\in R$  and all  $k$. Writing $$c_k = \sum_{j=0}^\infty c_{kj}\xi^j,$$
where $c_{kj}\in R\otimes S$, it follows from \eqref{ena1} and  \eqref{ena2}  that 
\begin{equation}\label{ena3}xc_{k0} = c_{k0}x\end{equation}
 for all $x\in R$. Let us write $c_{k0} = \sum_j p_{kj}\otimes s_{kj}$
with the $s_{kj}$'s linearly independent. From \eqref{ena3} we infer that $$\sum_j (xp_{kj} - p_{kj}x)\otimes s_{kj} =0$$
for all $x\in R$, yielding $xp_{kj} - p_{kj}x=0$. Since $R$ is central this means that  each $p_{kj}$ is a scalar multiple of $1$. Accordingly, each $c_{k0}$ is of the form
$1\otimes t_k$ for some $t_k\in S$. From the first identity in $\eqref{ena2}$ one easily deduces that $\sum_{k=1}^d r_k \otimes t_k = 1\otimes 1$. Writing $1=\sum_{k=1}^d\lambda_k r_k$, where $\lambda_k\in F$, it follows that $t_k=\lambda_k 1$. We may assume that $\lambda_1\ne 0$. Accordingly, $c_{10}$ is a nonzero scalar multiple of unity of $R\otimes S$, implying that $c_1$ is invertible in $(R\otimes S)[[\xi]]$. Applying $\eqref{ena2}$ we arrive at $\varphi(x)= c_1xc_1^{-1}$ for all $x\in R$.

$(\Leftarrow)$ Straightforward; more generally, the SN property is clearly preserved by retractions. Here an algebra $S'$ is a retract of $S$ if $S'\subset S$ and there exists a homomorphism $\pi:S\to S'$ that restricts to the identity map on $S'$.
\end{proof}

\end{document}